\documentclass[a4paper,12pt]{amsart}

\usepackage{amsmath, amssymb, amsthm}
\usepackage{geometry}
\usepackage{hyperref}
\usepackage{xcolor}
\def\N{\mathbb N}

\def\NN{\mathbb N}
\def\CC{\mathbb C}
\def\CC{\mathbb C}
\def\DD{\mathbb D}
\def\R{\mathbb R}
\def\RR{\mathbb R}
\def\ZZ{\mathbb Z}

\newtheorem{thm}{Theorem}[section]
\newtheorem{corollary}[thm]{Corollary}
\newtheorem{proposition}[thm]{Proposition}
\newtheorem{theorem}[thm]{Theorem}
\newtheorem{lemma}[thm]{Lemma}

\theoremstyle{definition}
\newtheorem{definition}[thm]{Definition}
\newtheorem{example}[thm]{Example}
\theoremstyle{remark}

\title{ Entire groups generated by fractional powers of operators}
\author{Rodrigo Barber\'an and Pedro J. Miana}

\address{Departamento de
Matem\'aticas, Instituto Universitario de Matem\'aticas y Aplicaciones, Universidad de Zaragoza,  50009 Zaragoza, Spain.}
 \email{rbarberan@unizar.es, pjmiana@unizar.es}
\date{}

\thanks{ This first author has been partially supported
the project PTA2024-024763-I, of MICIU/AEI/10.13039/501100011033 and FSE+. The second author is supported by PID2022-137294NB-I00, DGI-FEDER, of the MCEI and Project E48-20R, Gobierno de Arag\'on, Spain}

\keywords{Entire groups; fractional powers of operators;  Lévy functions; special sequences and polynomials}

\subjclass[2020]{Primary 47D03, 30B10; Secondary 05A40, 11B83,  47A60}

\begin{document}
\maketitle

\begin{abstract} Let $T$ be a power-bounded operator on a Banach space $X$. We treat the sequence of polynomials $(p_{n;a})_{n\ge 0}$ such that the entire group generated by the fractional power operator $-(I-T)^a$ is given by
$$
e^{-t(I-T)^a}=e^{-t}\sum_{n=0}^\infty p_{n;a}(t)T^n, \qquad t\in \CC, \quad \Re a>0.
$$

We provide a self-contained introduction to the polynomial family $(p_{n;a})_{n\ge 0}$, for $a\in \CC$, whose coefficients are determined by means of a suitable recurrence relation. The sequence $(n!p_{n;a})_{n\ge 0}$ forms a family of Sheffer polynomials. For $\Re a>0$ and $t\in \CC$, the sequence $(p_{n;a}(t))_{n\ge 0}$ belongs to the Lebesgue sequence space $\ell^1$ of absolutely summable sequences. Moreover, these polynomials are closely related to the L\'evy density functions $(f_{t,\alpha})_{t>0}$ defined for $0<\alpha<1$.

Finally, we discuss several particular cases corresponding to specific values of $a\in \CC$, as well as applications to fractional powers in the Banach algebra $\ell^1$, multiplication operators, and Cesàro means.
\end{abstract}

\section*{Introduction}

The so-called stable L\'{e}vy distribution, for $0<\alpha<1$, is defined by
\begin{equation}\label{levyfunct}
f_{t,\alpha}(r) :=  \frac{1}{2\pi i} \int_{\sigma - i \infty}^{ \sigma + i \infty} e^{ z r - t z^{\alpha}} \, dz, \qquad \sigma >0, \quad t>0, \,\, r \geq 0,
\end{equation}
where the branch of $z^{\alpha}$ is chosen so that ${\Re}(z^{\alpha})>0$ whenever ${\Re}(z)>0$. It is well known that, in general, no explicit representation is available for the L\'evy distribution, except in the particular case $\alpha=\frac{1}{2}$, where one has
\begin{equation*}
f_{t,\frac{1}{2}}(r)=\frac{t}{\sqrt{4\pi r^3}}e^{\frac{-t^2}{4r}}, \qquad  t,r>0.
\end{equation*}
These functions were introduced by S. Bochner  in the study of certain stochastic processes. They constitute the density functions associated with stable L\'evy processes and are closely related to fractional Brownian motion (\cite{Bochner1}).

In \cite[Section IX.11]{Yo80}, K. Yosida employed these functions systematically in the study of $C_0$-semigroups of operators generated by fractional powers of infinitesimal generators of uniformly bounded $C_0$-semigroups of linear operators on a Banach space $X$ . More precisely, let $(T(s))_{s>0}$ be a uniformly bounded $C_0$-semigroup generated by $A$. Then the fractional power $-(-A)^\alpha$ generates a uniformly bounded $C_0$-semigroup $(e^{-t(-A)^\alpha})_{t>0}$ given by
\begin{equation}\label{yosid}
e^{-t(-A)^\alpha}(x)=\int_0^\infty f_{t,\alpha}(s) \, T(s)(x)\, ds, \qquad x\in X.
\end{equation}

We now consider the Lebesgue Banach space $(L^1(\R^+),\Vert \cdot \Vert_1)$ of absolutely integrable functions on $(0,\infty)$, endowed with the norm
$$
\Vert f \Vert_1:=\int_0^\infty \vert f(\lambda)\vert \, d\lambda<\infty.
$$
It holds that $ f_{t,\alpha}(\lambda) \geq 0$ for $\lambda >0$, $\Vert f_{t,\alpha} \Vert_1=1$, and
$$
f_{t+s,\alpha} (\lambda)= \int_0^{\lambda} f_{t,\alpha}(\lambda -\mu)f_{s,\alpha}(\mu)\, d\mu, \quad \lambda >0, \quad t,s>0.
$$
See, for instance, \cite{Yo80}.

Let $\ell^p$ denote the Lebesgue sequence space consisting of complex sequences $f=(f(n))_{n\in\N_0}$ such that
$$
\Vert f\Vert_p:=\left(\sum_{n=0}^{\infty}\lvert f(n)\rvert^p\right)^{1/p}<\infty, \qquad 1\le p<\infty.
$$
The space $\ell^\infty$ consists of bounded complex sequences endowed with the supremum norm, and $c_{0,0}$ denotes the set of sequences with finite support. It is well known that $\ell^1 \hookrightarrow \ell^p \hookrightarrow \ell^\infty$. The convolution product $\ast$ of two sequences $f=(f(n))_{n\in\N_0}$ and $g=(g(n))_{n\in\N_0}$ is defined by
$$
f\ast g(n)=\sum_{j=0}^n f(n-j) g(j), \qquad n\in \N_0.
$$
With this operation, $(\ell^1, \ast)$ is a Banach algebra, and $(\ell^p, \ast)$ is a Banach $\ell^1$-module for $1<p\le \infty$. We write $f^{1\ast}= f$ and $f^{n\ast}:=f\ast f^{(n-1)\ast}$ for $n\ge 2$.

The main motivation of this work is to identify sequences, specifically polynomial sequences $(p_{(\cdot);a}(t))_{t\in \CC}\subset \ell^1$, that play a role analogous to that of $(f_{t,\alpha})_{t>0}$ in $L^1(\R^+)$ (see Corollary \ref{levy}). As is often the case, the discrete setting (involving polynomial sequences) exhibits a richer and more intricate structure than its continuous counterpart. Applications to power-bounded operators on Banach spaces are also provided (see Section \ref{coboundaries}).

The paper is self-contained and includes the necessary preliminary material. Its structure is as follows.

In the first section, we consider the following recurrence relation: for $a\in \CC$, define $b_{1,n;a}= (a)_n$;   $b_{n, n;a}=a^n$, and
\begin{equation}\label{recur}
b_{j,n+1;a}:= (a j-n)b_{j,n;a}+ab_{j-1,n;a},  \qquad 2\le j\le n,
\end{equation}
(Definition \ref{intro}). Here, for $z\in \CC$ and $n\in \N$,  we denote by $(z)_n$ and $z^{(n)}$ the falling and rising factorials, respectively:
\begin{eqnarray*}
(z)_n&=&z(z-1)\cdots(z-n+1),\\
z^{(n)}&=&z(z+1)\cdots(z+n-1),
\end{eqnarray*}
with the usual conventions $(z)_0= z^{(0)}=1$ and $(0)_n=0^{(n)}=0$ for $n\in \NN$. Note that $(z)_n=(-1)^n(-z)^{(n)}$. These quantities are also commonly referred to as Pochhammer symbols.

For particular values of $a$ (e.g., $a=-1, {1\over2}, 1, 2,\dots$), the sequences $(b_{j,n;a})_{1\le j\le n,n}$ are well known and appear in \cite{sloane}, including, for instance, signed Lah and Bessel numbers. In particular, the Lah numbers $(L(n,j))_{n\ge 1, 1\le j\le n}$ are given by
$$
L(n,j):={n-1\choose j-1}{n!\over j!}, \qquad n\ge 1,\, 1\le j\le n,
$$
and count the number of ways to partition a set of $n$ elements into $j$ nonempty linearly ordered subsets. They were introduced by I. Lah in 1954 (\cite{Lah}) and satisfy
$$
L(n+1,j)=(n+j)L(n,j)+L(n,j-1), \quad 1\le j \le  n,
$$
which corresponds to the recurrence \eqref{recur} for $a=-1$ and then $b_{j,n;-1}=(-1)^nL(n,j)$.

Moreover, Lah numbers provide the connection between rising and falling factorials:
\begin{eqnarray}\label{exten}
 z^{(n)}&=&\sum_{j=0}^n L(n,j)(z)_{j},\\
 (z)_{n}&=&\sum_{j=0}^n(-1)^{n-j}L(n,j)z^{(j)}.\nonumber
\end{eqnarray}

 In Theorem \ref{main}, we establish the multiplicative relation
$$
(az)_n= \sum_{l=1}^n b_{l,n;a}(z)_l, \qquad n\ge 1, \, z\in \CC,
$$
which appears to be new.

In the second section, we consider the Cesàro numbers defined by $k^{\alpha}(0):=1$ and
$$
k^{\alpha}(n):={\alpha ^{(n)}\over n!}= (-1)^n {(-\alpha)_n\over n!}, \qquad n\in\N,
$$
and prove that $k^{\alpha}\in \ell^1$ if and only if $\Re\alpha <0$, together with an explicit computation of $\Vert k^{\alpha}\Vert_1$ (Theorem \ref{keys}).

The study of higher-order derivatives of functions of the form
$$
{d^{n}\over dz^n}\left(e^{ -z^{a}}\right), \qquad a\in \CC,
$$
has played a significant role in the historic development of orthogonal polynomials and special functions. Early contributions in this direction were made by R. Hoppe (1846), E.T. Bell (1934), and later H.W. Gould and A.T. Hopper (1961). For $a=m\in \N$, the Gould--Hopper polynomials $(H_n^{(m)})_{n\ge 0}$ are defined by
$$
H_n^{(m)}(z)= (-1)^n e^{z^m}{d^n\over dz^n}(e^{-{z^m}}), \qquad n\ge 0, \quad z\in \CC,
$$
and generalize the classical Hermite polynomials.

In the third section, we derive an explicit expression for the $n$-th derivative of $F_{t,a}(z):=e^{-t z^{a}}$. Although it seems to be known, at least it is given for $a\in \N$ in \cite{B}, we have decided to include to avoid the lack of completeness. In particular, we show that
$$
F^{(n)}_{t,a}(z)=e^{-t z^{a}}
\sum_{j=1}^{n}
(-1)^{j}b_{j,n;a}\,t^j z^{ja-n},
$$
and express the coefficients $b_{j,n;a}$ in terms of Bell polynomials (Theorem \ref{recur2}).

In the fourth section, for fixed $a\in \CC$, we consider the polynomial sequence $(p_{n;a})_{n\ge 0}$ defined by $p_{0;a}(t)=1$ and
$$
p_{n;a}(t):={(-1)^n\over n!}\sum_{l=1}^n b_{l,n;a}(-t)^l, \qquad n \ge 1,\quad t\in \CC.
$$
These polynomials arise as the Taylor coefficients of the holomorphic function $z\mapsto e^{t(1-(1-z)^a)}$ on the unit disc $\mathbb{D}$:
$$
\sum_{n=0}^\infty p_{n;a}(t)z^n=e^{t(1-(1-z)^a)}, \qquad \vert z\vert<1.
$$
Furthermore, they satisfy the convolution identity
$$
p_{n;a}(t+s)=\sum_{j=0}^n p_{j;a}(t)p_{n-j;a}(s), \qquad t,s\in \CC.
$$
In fact, $(n! p_{n;a})_{n\ge 0}$ forms a family of Sheffer polynomials, which can be studied via umbral calculus \cite{CG, Roman}.

For particular values of $a$, such as $a=-1$ or $a=m\in \NN$, we recover classical generating functions for Laguerre, Hermite, and Gould--Hopper polynomials. Moreover, we establish the connection with L\'{e}vy functions:
$$
\int_0^\infty {r^n\over n!}e^{-r}f_{t,\alpha}(r)\,dr= e^{-t}p_{n;\alpha}(t), \qquad t>0,
$$
for $0<\alpha<1$ and $n\in \NN_0$ (Corollary \ref{levy}).

In the fifth and sixth sections, we present several applications of our result in operator theory,  Banach algebras and  complex analysis.

Given a power-bounded operator $T$ on a Banach space $X$, we consider the fractional power $(I-T)^a$, $a\in \CC$. In Theorem \ref{poww2}, we prove that the entire group generated by $I-(I-T)^a$ admits the representation
$$
e^{t(I-(I-T)^a)}=\sum_{n=0}^\infty p_{n;a}(t)T^n, \qquad t\in \CC,
$$
under suitable assumptions on $T$ and $a$.

In the last section, we describe the semigroup generated by $(\delta_0-\delta_1)^a$ in $\ell^1$, study multiplication operators on Hardy spaces $H^p(\DD)$, and propose extensions of our results to $(C,a)$-bounded operators.

\section{Falling factorials and recurrence relation}

Given a fix $a\in \CC$, we present in this section a triangle array $(b_{j,n;a})_{n\ge 1, 1\le j\le n}$ such that
$b_{1,n;a}= (a)_n$, $b_{n, n;a}=a^n$ and $b_{j,n+1;a}$ is obtained by a recurrence relation (Definition \ref{intro}). For some particular values of $a$, for example $a\in\{-1,0,{1\over 2},1,2\}$, we obtain some known (or connections to) some known families of entire numbers, as Lah or Bessel numbers. A essential property of these numbers is the equality
$$
(az)_n= \sum_{l=1}^n b_{l,n;a}(z)_l, \qquad n\ge 1, \, z\in \CC,
$$
which is shown in Theorem \ref{main}. In this section we include several tables to ease the read of the paper.

\begin{definition}\label{intro} Take $a\in \CC$ and we define the recurrence relation  $b_{1,n;a}= (a)_n$, $b_{n, n;a}=a^n$ and
$$
b_{j,n+1;a}:= (a j-n)b_{j,n;a}+ab_{j-1,n;a}, \qquad 2\le j\le n.
$$
\end{definition}

In the following table we present the first values of the sequence $(b_{j,n;a})_{1\le j \le n,n}$.

\begin{table}[h]\label{Tableb}\small{
\centering
\begin{tabular}{c|ccccc}
$b_{j,n;a}$ & $n=1$ & $n=2$ & $n=3$ & $n=4$ & $n=5$ \\ \hline
$j=1$ & $a$ & $a(a-1)$ & $a(a-1)(a-2)$ & $a(a-1)(a-2)(a-3)$ & $a(a-1)(a-2)(a-3)(a-4)$\\ \hline
$j=2$ &$-$ &$a^2$ &$3a^2(a-1)$ & $a^2(7a-11)(a-1)$ &$5a^2(3a-5)(a-1)(a-2)$\\  \hline
$j=3$ &$-$ &$-$ &$a^3$ &$6a^3(a-1)$ & $5a^3(5a-7)(a-1)$ \\ \hline
$j=4$ &$-$ &$-$ &$-$ &$a^4$ &$10a^4(a-1)$ \\ \hline
$j=5$ &$-$ &$-$ &$-$ &$-$ &$a^5$\\
\end{tabular}
\vspace{2pt}
\caption{\small The first five $b_{j,n;a}$ for $1\le j,n\le 5.$}}
\end{table}

\begin{example}{\rm

Note that $b_{j,n;0}=0$ and $b_{j,n;1}=\delta_{j,n}$ where $\delta_{j,n}$ is the Kronecker delta,
$$
\delta_{j,n} = \begin{cases}
1, & \text{if } 1\le j = n, \\
0, & \text{if } 1\le j \neq n.
\end{cases}
$$}

Now we present the triangle $(b_{j,n;-1})_{1\le j\le n\le 5}$. As we comment in the Introduction,  an immediate consequence of the Lah recurrence relation is that $b_{j,n;-1}=(-1)^nL(n,j)$, where $L(n,j)$ are the Lah numbers.

\begin{table}[h]\label{Tableb-1}\small{
\centering
\begin{tabular}{c|ccccc}
$b_{j,n;-1}$ & $n=1$ & $n=2$ & $n=3$ & $n=4$ & $n=5$ \\ \hline
$j=1$ & $-1$ & $2$ & $-6$ & $24$& $-120$ \\
$j=2$ & $-$ & $1$ & $-6$ & $36$ & $-240$ \\
$j=3$ & $-$ & $-$ & $-1$ & $12$ & $-120$ \\
$j=4$ & $-$ & $-$ & $-$ & $1$ & $-20$ \\
$j=5$ & $-$ & $-$ & $-$ & $-$ & $-1$ \\
\end{tabular}}
\vspace{2pt}
\caption{\small The first five $b_{j,n;-1}$ for $1\le j,n\le 5.$}
\end{table}

\end{example}
For $a=2$, we obtain the following triangle,

\begin{table}[h]\label{Tableb2}\small{
\centering
\begin{tabular}{c|ccccc}
$b_{j,n;2}$ & $n=1$ & $n=2$ & $n=3$ & $n=4$ & $n=5$ \\ \hline
$j=1$ & $2$ & $2$ & $-$ & $-$ & $-$ \\
$j=2$ & $-$ & $4$ & $12$ & $12$ & - \\
$j=3$ & $-$ & $-$ & $8$ & $48$ & $120$ \\
$j=4$ & $-$ & $-$ & $-$ & $16$ & $160$ \\
$j=5$ & $-$ & $-$ & $-$ & $-$ & $32$\\
\end{tabular}}
\vspace{2pt}
\caption{\small The first five $b_{j,n;2}$ for $1\le j,n\le 5.$}
\end{table}

In the next proposition, we identify
$b_{j,n;2}$ for $1\le j\le n.$ We denote by $[x]$ and $\{x\}$ the integer part and decimal part of a real number $x$.

\begin{proposition}\label{partii} Take $n\in\{1,2\}$ and $1\le j\le n$. Then $\displaystyle b_{j,n;2}=\frac{2^{2j-n} n!}{(n-j)!(2j-n)!}$.
Now, take $n\ge 3$ and $1\le j\le n$. Then

$$b_{j,n;2}=  \begin{cases}
0, & \text{if } 1\le j <  n-[{n\over 2}], \\
\displaystyle \frac{2^{2j-n} n!}{(n-j)!(2j-n)!}, & \text{if } n-[{n\over 2}]\leq j \leq n.
\end{cases}
$$

\end{proposition}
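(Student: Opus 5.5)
The proof will be by induction on $n$, using only Definition \ref{intro} with $a=2$. Write $c_{j,n}:=\frac{2^{2j-n} n!}{(n-j)!(2j-n)!}$ for $n-[\frac n2]\le j\le n$, and set $c_{j,n}:=0$ for $1\le j<n-[\frac n2]$. This extension is natural because $1/(2j-n)!=0$ when $2j-n<0$. The cases $n=1,2$ are checked directly: $b_{1,1;2}=2=c_{1,1}$, $b_{1,2;2}=(2)_2=2=c_{1,2}$ and $b_{2,2;2}=4=c_{2,2}$.

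For the induction we fix $n\ge 2$ and assume $b_{j,n;2}=c_{j,n}$ for all $1\le j\le n$. The boundary entries are immediate. For $j=n+1$ we have $b_{n+1,n+1;2}=2^{n+1}=c_{n+1,n+1}$. For $j=1$ we have $b_{1,n+1;2}=(2)_{n+1}=0$ since $n+1\ge 3$; on the other side, $1<(n+1)-[\frac{n+1}2]$ for $n+1\ge3$, so $c_{1,n+1}=0$ as well.

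For $2\le j\le n$ the recurrence reads
$$b_{j,n+1;2}=(2j-n)\,c_{j,n}+2\,c_{j-1,n},$$
and we must show that this equals $c_{j,n+1}$. We split into two cases according to the sign of $2j-n$.

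\emph{Case $2j-n\ge 1$.} Then $2(j-1)-n\ge -1$, and $c_{j-1,n}$ is either given by the formula or vanishes (when $2j-2-n=-1$). In both situations the formula with $1/(2j-n-2)!$ is valid, interpreted as $0$ for a negative argument. Factoring out $\frac{2^{2j-n}n!}{(n-j+1)!\,(2j-n-1)!}$, the two terms contribute $(n-j+1)$ and $(2j-n-1)$ respectively. Their sum is $j$, and combining, $\frac{2^{2j-n}n!\,j}{(n-j+1)!(2j-n-1)!}\cdot\frac{?}{}$ should reduce to $\frac{2^{2j-n-1}(n+1)!}{(n+1-j)!(2j-n-1)!}$. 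Checking the powers of $2$ and the factor $(n+1)$ against $j$ is where the bookkeeping can go wrong: the two terms carry $2^{2j-n}$ and $2\cdot2^{2j-n-2}$, which need not match. So I will redo the factorisation carefully and not trust the combination above. Concretely, I will write the sum as $\frac{2^{2j-n-1} n!}{(n+1-j)!(2j-n-1)!}\bigl[2(n-j+1)+(2j-n-1)\bigr]$. The bracket equals $n+1$, which gives exactly $c_{j,n+1}$.

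\emph{Case $2j-n\le 0$.} Then $c_{j,n}=0$. If $2j-n=0$, the first term vanishes anyway, and $c_{j-1,n}=0$ because $2(j-1)<n$. If $2j-n<0$, both $c_{j,n}$ and $c_{j-1,n}$ are zero. In either situation $b_{j,n+1;2}=0$. This agrees with $c_{j,n+1}$, since $2j-(n+1)<0$ means $j<(n+1)-[\frac{n+1}2]$.

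The main obstacle is this index bookkeeping. It requires checking that $n-[\frac n2]=\lceil n/2\rceil$ is exactly the threshold $2j\ge n$, and it requires handling the transition entries ($2j-n\in\{0,1\}$), where one of the two recurrence terms uses the vanishing convention. Once the convention $1/m!=0$ for $m<0$ is adopted uniformly, the whole identity reduces to the Pascal-type relation $2(n-j+1)+(2j-n-1)=n+1$.
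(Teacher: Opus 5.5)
Your proof is correct and follows the paper's route. Like the paper, you define $c_{j,n}$, check the boundary values $c_{1,n}$ and $c_{n,n}$, and verify the recurrence. Your convention $1/m!=0$ for $m<0$ folds the paper's separate odd boundary case ($2j-n=1$) into the generic identity $2(n-j+1)+(2j-n-1)=n+1$, and folds the even boundary case ($2j=n$) into the zero case.

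Two things to tidy in the write-up. First, drop the abandoned first factorisation. Second, in your second case $c_{j,n}=0$ holds only when $2j<n$, since for $2j=n$ one has $c_{n/2,n}=n!/(n/2)!\neq 0$; the term still vanishes because of the factor $2j-n=0$, as you note.
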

\begin{proof}
For $n \le 2$ it is easy to verify that the formula holds: $b_{1,1;2}=2$, $b_{1,2;2}=2$ and $b_{2,2;2}=4$. \\

For $n\ge 3$, we define $c_{j,n}:=  \begin{cases}
0, & \text{if } 1\le j <  n-[{n\over 2}], \\
\displaystyle \frac{2^{2j-n} n!}{(n-j)!(2j-n)!}, & \text{if } n-[{n\over 2}]\leq j \leq n.
\end{cases}$\\

Note that $c_{n,n}=2^n=b_{n,n;2}$ and $c_{1,n}=(2)_n=b_{1,n;2}$ for $n\ge 1.$

We need to show that $c_{j,n}$ satisfies the recurrence relation for $2 \le j \le n$. For $x\in \RR,$ we write $x=[x]+\{x\}$ with $[x]\in \ZZ$ and $0\le \{x\}<1$. For $j < n-[\frac{n}{2}]$ both sides of the relation are $0$.

In the case $j > n-[\frac{n}{2}]$, we have
\begin{eqnarray*}
(2j-n)c_{j,n}+2c_{j-1,n}&=&\frac{2^{2j-n}n!}{(n-j)!(2j-n-1)!}+\frac{2^{2j-n-1}n!}{(n-j+1)!(2j-n-2)!}\\
&=&\frac{2^{2j-n-1}n!}{(n-j)!(2j-n-2)!}\left(\frac{2}{2j-n-1}+\frac{1}{n-j+1}\right)\cr&=&\frac{2^{2j-n-1}(n+1)!}{(n+1-j)!(2j-n-1)!}=c_{j,n+1}.
\end{eqnarray*}

In the boundary case $j=n-[\frac{n}{2}]$, we see that if $n$ is even, both sides of the relation are 0. If $n$ is odd, $n=2k-1$ for some natural $k$. Then $j=k$ and $c_{j-1,n}=0$ and
\begin{eqnarray*}
  (2j-n)c_{j,n}+2c_{j,n-1}&=&\frac{2^{2j-n}n!}{(n-j)!(2j-n-1)!}=\frac{2(2k-1)!}{(k-1)!}\\
  &=&\frac{(2k)!}{k!}=\frac{2^{2j-n-1}(n+1)!}{(n+1-j)!(2j-n-1)!}=c_{j,n+1}.
\end{eqnarray*}

So, the recurrence relation holds and we conclude that $c_{j,n}=b_{j,n;2}$.\end{proof}

Bessel numbers of the first kind $(a(n,j))$ are defined by
$$ a(n,j):= {(n+j)!\over 2^j(n-j)!j!}, \quad 0\le j\le n, \quad 0\le n.$$
see, for example, \cite[p.18]{G} and sequence A001498 in \cite{sloane}.

\begin{table}[h]\label{TableBN}\small{
\centering
\begin{tabular}{c|ccccc}
$a(n,j)$ &$n=0$ &$n=1$ & $n=2$ & $n=3$ & $n=4$ \\ \hline
$j=0$ & $1$ &$1$ & $1$ & $1$ & $1$ \\
$j=1$ & $-$ & $1$ & $3$ & $6$ & $10$ \\
$j=2$ & $-$ & $-$ & $3$ & $15$ & $45$ \\
$j=3$ & $-$ & $-$ & $-$ & $15$ & $105$ \\
$j=4$ & $-$ & $-$ & $-$ & $-$ & $105$ \\
\end{tabular}
\vspace{2pt}}
\caption{\small The first five $a(n,j)$ for $0\le j,n\le 4.$}
\end{table}

 {Bessel numbers} $a(n,j)$ form a triangular array of nonnegative integers that arise in combinatorics,
special functions and the theory of Bessel polynomials.
They count the number of ways to partition an
$n$-element set into
$j$ unordered pairs and singletons, where the pairs are distinguished. Numbers $(a(n,j))_{0\le j\le n}$ are coefficients of Bessel polynomials (exponents in increasing order) of degree $n$.

They satisfy the following recurrence relation
$$
a(n,j)=a(n-1,j)+{(n+j-1)}a(n-1,j-1)
$$

Now, we identify
$(b_{j,n;1/2})_{1\le j\le n}$.

\begin{table}[h]\label{Tableb1/2}
\centering\small{
\begin{tabular}{c|ccccc}
$b_{j,n;1/2}$ & $n=1$ & $n=2$ & $n=3$ & $n=4$ & $n=5$ \\\hline
$j=1$ & $1/2$ & $-1/4$ & $3/8$ & $-15/16$ & $105/32$ \\[1mm]
$j=2$ & $-$ & $1/4$ & $-3/8$ & $15/16$ & $-105/32$ \\[1mm]
$j=3$ & $-$ & $-$ & $1/8$ & $-3/8$ & $45/32$ \\[1mm]
$j=4$ & $-$ & $-$ & $-$ & $1/16$ & $-5/16$ \\[1mm]
$j=5$ & $-$ & $-$ & $-$ & $-$ & $1/32$\\[1mm]
\end{tabular}
\vspace{2pt}}
\caption{\small The first five $b_{j,n;1/2}$ for $1\le j,n\le 5.$}
\end{table}

\begin{proposition}\label{parti} Take  $n\ge 1$ and $1\le j\le n$. Then

$$
b_{j,n;1/2}={(-1)^{n+j}\over2^{n}}a(n-1,n-j).
$$

\end{proposition}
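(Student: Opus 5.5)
The proof will mirror that of Proposition \ref{partii}. Define $c_{j,n}:=\frac{(-1)^{n+j}}{2^n}a(n-1,n-j)$ for $n\ge 1$, $1\le j\le n$. The triangle $(b_{j,n;1/2})$ is uniquely determined by its two boundary columns and the recurrence of Definition \ref{intro}. It is therefore enough to check three things: $c_{1,n}=(1/2)_n$, $c_{n,n}=2^{-n}$, and that $c_{j,n}$ satisfies the recurrence for $a=1/2$. That recurrence reads
$$
b_{j,n+1;1/2}=\Bigl(\tfrac{j}{2}-n\Bigr)b_{j,n;1/2}+\tfrac12 b_{j-1,n;1/2},\qquad 2\le j\le n.
$$

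\emph{Boundary values.} For $j=n$ we get $c_{n,n}=2^{-n}a(n-1,0)=2^{-n}$, since $a(m,0)=1$. For $j=1$ we get $c_{1,n}=(-1)^{n+1}2^{-n}a(n-1,n-1)$, and $a(n-1,n-1)=\frac{(2n-2)!}{2^{n-1}(n-1)!}=(2n-3)!!$. On the other hand,
$$
(1/2)_n=\tfrac12\cdot(-\tfrac12)\cdots(\tfrac{3}{2}-n)=(-1)^{n-1}2^{-n}\,1\cdot 3\cdots(2n-3)=(-1)^{n-1}2^{-n}(2n-3)!!,
$$
so the two agree (with $(-1)!!=1$ when $n=1$).

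\emph{Recurrence.} Multiply the desired identity by $2^{n+1}(-1)^{n+1+j}$ and put $m=n-j$. It becomes
$$
a(n,m+1)=(2n-j)\,a(n-1,m)+a(n-1,m+1),
$$
where the term $\frac12 c_{j-1,n}$ contributes $a(n-1,n-j+1)$ with a matching sign. Since $2n-j=n+m$, this is exactly the Bessel recurrence $a(n,k)=a(n-1,k)+(n+k-1)a(n-1,k-1)$ with $k=m+1$. The identity can also be verified directly from the closed form $a(n,k)=\frac{(n+k)!}{2^k(n-k)!k!}$ by factoring out a common term, as in the proof of Proposition \ref{partii}. The range $2\le j\le n$ corresponds to $0\le m\le n-2$, so $k=m+1\le n-1$ and every term appearing is a genuine entry of the Bessel triangle. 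No case split is needed.

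The only real obstacle is bookkeeping: getting the signs $(-1)^{n+j}$ and the powers of $2$ to cancel consistently, and matching the coefficient $\frac{j}{2}-n$ with $-(n+m)/2$ after the index shift $m=n-j$. Once the sign and index conventions are fixed, the result follows by induction on $n$ from the uniqueness of the recursively defined triangle.
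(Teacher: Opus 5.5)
Your proof is correct and follows the same scheme as the paper: define the candidate triangle, check the columns $j=1$ and $j=n$ against $(1/2)_n$ and $2^{-n}$, verify the recurrence for $2\le j\le n$, and conclude by uniqueness of the recursively defined triangle. The only difference is cosmetic. The paper checks the recurrence by direct factorial manipulation of the closed form $\frac{(-1)^{n+j}(2n-j-1)!}{2^{2n-j}(j-1)!(n-j)!}$. You instead reduce it, with the right signs and the substitution $m=n-j$, to the Bessel recurrence $a(n,k)=a(n-1,k)+(n+k-1)a(n-1,k-1)$, which the paper states but never uses.
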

\begin{proof}

To show that, we define for a while $ \displaystyle{c_{j,n}:={(-1)^{n+j}\over2^{2n-j}}{(2n-j-1)!\over (j-1)!(n-j)!}.}$ Note that
\begin{eqnarray*}
b_{1,n;\frac{1}{2}}&=&(1/2)_n=\frac{(-1)^{n-1}(2n-3)!!}{2^n}=\frac{(-1)^{n+1}(2n-2)!}{2^{2n-1}(n-1)!}=c_{1,n},\cr
b_{n,n;\frac{1}{2}}&=&\frac{1}{2^n}=c_{n,n}.\end{eqnarray*}
Now we just need to check that the recurrence relation holds. Take $2\le j\le n$ and then
\begin{align*}
({j\over 2} -n)c_{j,n}+{1\over 2}c_{j-1,n}=(\frac{j}{2}-n)\frac{(-1)^{n+j}(2n-j-1)!}{2^{2n-j}(j-1)!(n-j)!}+\frac{1}{2}\cdot \frac{(-1)^{n+j-1}(2n-j)!}{2^{2n-j+1}(j-2)!(n-j+1)!}=\\
\left(\frac{1}{2(j-1)}+\frac{1}{4(n-j+1)}\right)\frac{(-1)^{n+j+1}(2n-j)!}{2^{2n-j}(j-2)!(n-j)!}=\frac{(-1)^{n+j+1}(2n+1-j)!}{2^{2n+2-j}(j-1)!(n+1-j)!}=c_{j,n+1}.
\end{align*}
We conclude that $c_{j,n}= b_{j,n;1/2}$ for $n\ge 1$ and $1\le j\le n$.
\end{proof}

The usual finite difference of a sequence, $(f(m))_{m\in \NN_0},$ is defined by $\Delta f(m) :=f(m +1)-f(m)$ for $m\in \NN_0$; we iterate $\Delta^{j+1}= \Delta^j(\Delta)$ to get
\begin{equation}\label{ite}
\Delta^jf(m)=\sum_{l=0}^j(-1)^{j-l} {j\choose l}f(m+l), \qquad m,j\in \NN_0.\end{equation}
Note that $\Delta z^n$ is a polynomial of order $n-1$, $\Delta^nz^n=n!$ and $\Delta^mz^n=0$ if $m>n.$ Since $\Delta^m$ is a linear operator, we conclude that $\Delta^m P=0$ in the case that $P$ is a polynomial of degree $n<m.$
\begin{theorem} Let $a\in \CC$, $n\ge 1$,  $1\le j\le n$ and $b_{j,n;a}$ given in Definition \ref{intro}. Then

$$
b_{j,n;a}= {(-1)^j\over j!}\sum_{l=1}^j(-1)^l  {j\choose l} {(al)_n}={1\over j!}\Delta^j((am)_n)(0).
$$
\end{theorem}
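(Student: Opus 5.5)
The plan is to define, for $n\ge 1$ and $1\le j\le n$,
$$
c_{j,n}:=\frac{1}{j!}\Delta^j\big((am)_n\big)(0)=\frac{1}{j!}\sum_{l=0}^j(-1)^{j-l}{j\choose l}(al)_n ,
$$
and show that $c_{j,n}$ satisfies the boundary values and the recurrence of Definition \ref{intro}. By induction on $n$ this forces $c_{j,n}=b_{j,n;a}$. The first equality in the statement is just \eqref{ite} at $m=0$. The $l=0$ term vanishes because $(0)_n=0$ for $n\ge1$, so the sum may start at $l=1$, and $(-1)^{j-l}=(-1)^{j}(-1)^{l}$ gives the stated form.

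\emph{Boundary values.} For $j=1$ we get $c_{1,n}=(a)_n-(0)_n=(a)_n$. For $j=n$, note that $m\mapsto (am)_n$ is a polynomial in $m$ of degree $n$ with leading coefficient $a^n$. Since $\Delta^n m^n=n!$ and $\Delta^n$ kills lower-degree terms, $c_{n,n}=a^n n!/n!=a^n$.

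\emph{Recurrence.} Write $P_n(m):=(am)_n$, so that $P_{n+1}(m)=(am-n)P_n(m)$. The key tool is a discrete Leibniz rule for the product of $P_n$ with the linear factor $am-n$:
$$
\Delta^j\big((am-n)P_n(m)\big)=(am-n)\Delta^jP_n(m)+j\,a\,\Delta^{j-1}P_n(m+1).
$$
This follows by induction on $j$ from $\Delta(gh)(m)=g(m+1)\Delta h(m)+h(m)\Delta g(m)$ with $\Delta(am-n)=a$. Evaluating at $m=0$ produces a shifted term $\Delta^{j-1}P_n(1)$. It is removed with $\Delta^{j-1}P_n(1)=\Delta^{j-1}P_n(0)+\Delta^{j}P_n(0)$. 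This gives
$$
\Delta^jP_{n+1}(0)=(aj-n)\Delta^jP_n(0)+ja\,\Delta^{j-1}P_n(0).
$$
Dividing by $j!$ yields $c_{j,n+1}=(aj-n)c_{j,n}+a\,c_{j-1,n}$, which is exactly the recurrence of Definition \ref{intro}.

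The main point requiring care is getting the Leibniz formula right, including the shift. It is cleaner to use the form $\Delta^j(gh)(0)=g(0)\Delta^jh(0)+j\,\Delta g\,\Delta^{j-1}h(1)$, valid because $g$ is linear. An alternative would be to expand $(al)_{n+1}=(al-n)(al)_n$ directly in the sum and use $l{j\choose l}=j{j-1\choose l-1}$, which gives the same identity. The last step is the induction itself. The recurrence, together with $b_{1,n;a}$ and $b_{n,n;a}$, determines every column $n+1$ from column $n$, and $c$ matches all of these data, so $c_{j,n}=b_{j,n;a}$ for all $1\le j\le n$.
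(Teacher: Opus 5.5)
Your proof is correct and follows the paper's argument. Like the paper, you set $c_{j,n}=\frac{1}{j!}\Delta^j((am)_n)(0)$, check $c_{1,n}=(a)_n$ and $c_{n,n}=a^n$, and verify the recurrence from $(al)_{n+1}=(al-n)(al)_n$ and the shift identity $\Delta^{j-1}f(1)=\Delta^{j-1}f(0)+\Delta^{j}f(0)$; the only difference is that you package the middle step as a discrete Leibniz rule for the linear factor, whereas the paper expands the binomial sum directly (the alternative you mention), which is the same computation.
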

\begin{proof} Take $n\in \NN_0$. To show the second equality, we apply (\ref{ite}) for $f_n(m):=(am)_n$. Note that $\Delta((az)_1)=a= b_{1,1;a}$, $(a)_n=b_{1,n;a}$, and
$$
(-1)^j\sum_{l=1}^j(-1)^l{(al)_n\over l!(j-l)!}={1\over n!}\Delta^n((am)_n)(0)=a^n,
$$
since $(am)_n$ is a polynomial of degree $n$,   the leader coefficient of  $a^n$ is $\Delta^nz^n=n!$.

For a while, we write $c_{j,n;a}= \displaystyle{(-1)^j\sum_{l=1}^j(-1)^l{(al)_n\over l!(j-l)!}}$.  Since $(al)_{n+1}=(al)_n(al-n)$, we get that
\begin{eqnarray*}
c_{j,n+1;a}&=& (-1)^j\sum_{l=1}^j(-1)^l{(al)_{n+1}\over l!(j-l)!}=(-1)^ja\sum_{l=1}^j(-1)^l{(al)_{n}\over (l-1)!(j-l)!}-nc_{j,n;a}\\
&=&a\sum_{l=0}^{j-1}(-1)^{j-1-l}{(a(l+1))_{n}\over l!(j-1-l)!}-nc_{j,n;a}\\&=&{a\over (j-1)!}\Delta^{j-1}((a(m+1)_n)(0)-nc_{j,n;a}.
\end{eqnarray*}
Since $\Delta^{j-1}f(m+1)=\Delta^{j-1}f(m)+\Delta^{j}f(m) $, we obtain
\begin{eqnarray*}
c_{j,n+1;a}&=&{a\over (j-1)!}\Delta^{j-1}((am)_n)(0)+{a\over (j-1)!}\Delta^{j}((am)_n)(0)-nc_{j,n;a}\\
&=&ac_{j-1,n;a}+ajc_{j,n;a}-nc_{j,n;a}=(aj-n)c_{j,n;a}+ac_{j-1,n;a}.
\end{eqnarray*}
By Definition \ref{intro}, we conclude that $b_{j,n;a}= c_{j,n;a}$.
    \end{proof}

In the next theorem, we present a equality which extends the equality (\ref{exten}) valid for Lah numbers for general $a\in \CC.$

\begin{theorem}\label{main} Take $a,z\in \CC$ and $n\ge 1$. Then
$$
(az)_n= \sum_{l=1}^n b_{l,n;a}(z)_l
$$

\end{theorem}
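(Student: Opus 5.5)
Two routes are natural: the Newton forward-difference expansion built on the preceding theorem, and an induction on $n$ built on the recurrence of Definition \ref{intro}. We take the first as the main argument and the second as a check.

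\textbf{Newton expansion.} For fixed $n$, the map $z\mapsto (az)_n$ is a polynomial in $z$ of degree at most $n$. Every such polynomial $P$ satisfies Newton's forward-difference formula
$$
P(z)=\sum_{j=0}^n \frac{\Delta^j P(0)}{j!}(z)_j .
$$
This is easy to establish. The falling factorials $(z)_0,\dots,(z)_n$ form a basis of the polynomials of degree at most $n$. They satisfy $\Delta (z)_j = j(z)_{j-1}$, so $\Delta^k (z)_j(0)=j!\,\delta_{j,k}$. Hence the coefficient of $(z)_j$ in $P$ is exactly $\Delta^jP(0)/j!$.

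Apply this to $P(z)=(az)_n$. The $j=0$ term is $P(0)=(0)_n=0$ for $n\ge1$, by the paper's convention. By the previous theorem, $\frac{1}{j!}\Delta^j((am)_n)(0)=b_{j,n;a}$. Substituting gives $(az)_n=\sum_{l=1}^n b_{l,n;a}(z)_l$, first for integer arguments and then for all $z\in\CC$, since both sides are polynomials.

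\textbf{Check by induction on $n$.} For $n=1$ the identity reads $az=a(z)_1$. Assume it holds for $n$. Use $(az)_{n+1}=(az)_n(az-n)$ and write
$$
az-n = a(z-l)+(al-n).
$$
Then $(z)_l(az-n)=a(z)_{l+1}+(al-n)(z)_l$. Summing over $l$ and collecting the coefficient of $(z)_j$ gives $(aj-n)b_{j,n;a}+ab_{j-1,n;a}$, which is $b_{j,n+1;a}$ for $2\le j\le n$.

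The boundary coefficients also match. For $j=1$ we get $(a-n)(a)_n=(a)_{n+1}$. For $j=n+1$ we get $a\cdot a^n=a^{n+1}$.

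The only delicate point in either route is this bookkeeping at $j=1$, $j=n$ and $j=n+1$, where the recurrence of Definition \ref{intro} is replaced by the explicit boundary values $b_{1,n;a}=(a)_n$ and $b_{n,n;a}=a^n$.
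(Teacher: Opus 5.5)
Your proof is correct, and your main route differs from the paper's. The paper proves Theorem~\ref{main} only by the direct induction on $n$ that you give as a ``check''. It writes $(az)_{n+1}=(az-n)(az)_n$, uses $z(z)_{l-1}=(z)_l+(l-1)(z)_{l-1}$, and matches coefficients against Definition~\ref{intro}.

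Your identity $(z)_l(az-n)=a(z)_{l+1}+(al-n)(z)_l$ does the same bookkeeping more tidily. It also shows that the only genuine boundary cases are $j=1$ and $j=n+1$. At $j=n$ the recurrence applies, with $b_{n,n;a}=a^n$ entering only as an input, so listing $j=n$ among the delicate cases is slightly off, though harmless.

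Your Newton argument instead expands $(az)_n$ in the basis $(z)_0,\dots,(z)_n$. It then reads off the coefficients $\Delta^j((am)_n)(0)/j!$ from the theorem immediately preceding Theorem~\ref{main}. This buys three things:
\begin{itemize}
\item it makes the result a one-line corollary;
\item it gives uniqueness of the connection coefficients $b_{l,n;a}$ for free;
\item it shows the two theorems are really the same statement, since via Newton's formula either one implies the other.
\end{itemize}
It does not save work overall. The induction on the recurrence has simply moved into the proof of the preceding theorem, whereas the paper's argument is self-contained and uses only Definition~\ref{intro}.

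Finally, you established Newton's formula as an identity of polynomials, so the step ``first for integer arguments, then for all $z\in\CC$'' is unnecessary.
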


\begin{proof}
For $n=1$, we have that $(az)_1=az=b_{1,1;a}(z)_1$ and the formula holds for $n=1$. Assume that the formula holds for some $n \ge 1$, i.e.,
\[
(az)_n= \sum_{l=1}^{n} b_{l,n;a} (z)_l.
\]
Note that we have $z(z)_{l-1}=(z)_l+(l-1)(z)_{l-1}$ for $l\ge 2$ and then
\begin{eqnarray*}
&\quad&(az)_{n+1}= (az-n)(az)_n=  ( a z-n) \sum_{l=1}^{n} b_{l,n;a} (z)_l=  \sum_{l=2}^{n+1} a b_{l-1,n;a}z (z)_{l-1}-\sum_{l=1}^{n} n b_{l,n;a} (z)_l \cr
&\quad&= a b_{n,n;a}z(z)_n+ \sum_{l=2}^{n} a b_{l-1,n;a}z (z)_{l-1}-\sum_{l=2}^{n} n b_{l,n;a} (z)_l -nb_{1,n;a} (z)_1\cr
&\quad&= a b_{n,n;a}z(z)_n+ \sum_{l=2}^{n} a b_{l-1,n;a}\left((z)_l+(l-1)(z)_{l-1}\right)-\sum_{l=2}^{n} n b_{l,n;a} (z)_l -nb_{1,n;a} (z)_1\cr
&\quad&= a b_{n,n;a}z(z)_n+ \sum_{l=2}^{n} a b_{l-1,n;a}(z)_l + \sum_{l=1}^{n-1}a b_{l,n;a} l(z)_{l}-\sum_{l=2}^{n} n b_{l,n;a} (z)_l -nb_{1,n;a} (z)_1\cr
&\quad&= a b_{n,n;a}(z-n)(z)_n+ \sum_{l=2}^{n}  (ab_{l-1,n;a} +(al-n) b_{l,n;a} )(z)_{l}+(a -n)b_{1,n;a} (z)_1\cr
&\quad&=  b_{n+1,n+1;a}(z)_{n+1}+ \sum_{l=2}^{n}  b_{l,n+1;a}(z)_{l}+b_{1,n+1;a} (z)_1= \sum_{l=1}^{n+1} b_{l,n+1;a} (z)_l.
\end{eqnarray*}
where we have used the  recurrence relation for $b_{l,n;a}$ given in Definition \ref{intro}.
\end{proof}

\noindent{\bf Comment} The umbral calculus is a symbolic method in combinatorics and algebra that treats sequences of polynomials in a way that mimics the manipulation of powers. Originally developed in the 19th century, it was made rigorous in the 20th century and, nowadays, is especially useful  in broader areas such as functional analysis, operator theory and special functions, see for example \cite{CG, Roman} and references therein.

Surely, Theorem \ref{main} (compare with \cite[Section 8]{Roman}) and some of the other results in this paper (for example Theorem \ref{main2} (i) and (ii)) may be shown following standard techniques of umbral calculus.

\section{Asymptotic estimation for Ces\`aro numbers}
The Euler Gamma function $\Gamma$ is defined by
$$
\Gamma(z):=\int_0^\infty {t^{z-1}}e^{-t}dt, \qquad \Re z>0.
$$
The extension of the Gamma function primarily involves analytic continuation to the entire complex plane, allowing definition for negative non-integers via the rising factorial $$\Gamma (z)=\Gamma (z+n)/z^{(n)}.$$
Handling poles at non-positive integers the following identity with the falling factorial holds
\[
(z)_n={\Gamma(z+1)\over \Gamma(z-n+1)}, \qquad n\ge 0.
\]
  The  following reflection formula holds for Euler Gamma function,
\begin{equation}
    \Gamma(z)\Gamma(1-z)={\pi \over \sin(\pi z)}, \qquad z\in \CC\setminus \ZZ.
\end{equation}

Although the next lemma is known, we include the proof to avoid the lack of completeness.
\begin{lemma}\label{121} Take $0<\Re \alpha<1$. Then
$$
\sum_{n=1}^\infty {\Gamma(n-\alpha)\over n!}= {\Gamma(1-\alpha)\over \alpha}.
$$
\end{lemma}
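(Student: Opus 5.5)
Divide by $\Gamma(1-\alpha)$ and write $\Gamma(n-\alpha)/\Gamma(1-\alpha)=(1-\alpha)^{(n-1)}$. The claimed identity is then equivalent to
$$
\sum_{n=1}^\infty \frac{(1-\alpha)^{(n-1)}}{n!}=\frac1\alpha .
$$
This is a binomial series evaluated at the boundary point $z=1$, so I would prove it in three steps: an identity for $|z|<1$, a convergence estimate, and Abel's theorem.

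\textbf{Step 1: the identity inside the disc.} For $|z|<1$ the binomial theorem gives
$$
(1-z)^{\alpha}=\sum_{n\ge0}\frac{(\alpha)_n}{n!}(-z)^n .
$$
For $n\ge1$ we have $(-1)^n(\alpha)_n=-\alpha\,(1-\alpha)^{(n-1)}$. Hence
$$
1-(1-z)^\alpha=\alpha\sum_{n\ge1}\frac{(1-\alpha)^{(n-1)}}{n!}z^n .
$$
Multiplying by $\Gamma(1-\alpha)$, this reads
$$
\sum_{n\ge1}\frac{\Gamma(n-\alpha)}{n!}z^n=\frac{\Gamma(1-\alpha)}{\alpha}\bigl(1-(1-z)^\alpha\bigr), \qquad |z|<1 .
$$

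\textbf{Step 2: convergence at $z=1$.} This is the main point. Using $\Gamma(n-\alpha)/\Gamma(n+1)\sim n^{-1-\alpha}$ (Stirling, or the standard ratio asymptotics $\Gamma(n+x)/\Gamma(n+y)\sim n^{x-y}$), the terms satisfy
$$
\left|\frac{\Gamma(n-\alpha)}{n!}\right|\sim n^{-1-\Re\alpha}.
$$
Since $\Re\alpha>0$, the series converges absolutely at $z=1$.

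\textbf{Step 3: pass to the boundary.} Abel's theorem lets us let $z\to1^-$ in the identity of Step 1. Because $\Re\alpha>0$, we have $(1-z)^\alpha\to0$ as $z\to1^-$ along the real axis. The limit of the right-hand side is therefore $\Gamma(1-\alpha)/\alpha$, which proves the claim.

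The hypothesis $\Re\alpha<1$ only serves to keep $\Gamma(1-\alpha)$ finite and nonzero. Rigorously justifying the asymptotic in Step 2 for complex $\alpha$ is the only delicate point. If one wants to avoid Stirling's formula, I would instead write $\Gamma(n-\alpha)/n!=B(n-\alpha,1+\alpha)/\Gamma(1+\alpha)$ and bound the Beta integral directly.

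An alternative route via Beta integrals is also available. Write
$$
\frac{\Gamma(n-\alpha)}{n!}=\frac{1}{\Gamma(1+\alpha)}\int_0^1 s^{n-\alpha-1}(1-s)^{\alpha}\,ds .
$$
Sum under the integral, which is justified by Tonelli applied to the absolute values $s^{n-\Re\alpha-1}(1-s)^{\Re\alpha}$. Using $\sum_{n\ge1}s^{n-1}=(1-s)^{-1}$, this gives
$$
\frac{1}{\Gamma(1+\alpha)}\int_0^1 s^{-\alpha}(1-s)^{\alpha-1}\,ds=\frac{\Gamma(1-\alpha)\Gamma(\alpha)}{\Gamma(1+\alpha)}=\frac{\Gamma(1-\alpha)}{\alpha}.
$$
This route is cleaner, and I would likely present it as the main proof. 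Its only obstacle is checking integrability, which requires $0<\Re\alpha<1$ at both endpoints.
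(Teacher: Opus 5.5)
Your proof is correct, and both of your routes work.

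\textbf{The Beta-integral route} follows the same plan as the paper's proof. The paper inserts Euler's integral $\Gamma(n-\alpha)=\int_0^\infty t^{n-\alpha-1}e^{-t}\,dt$ and sums the exponential series under the integral. This gives $\int_0^\infty t^{-\alpha-1}(1-e^{-t})\,dt$, which the paper evaluates by quoting Gradshteyn--Ryzhik 3.434(1); that table entry is one integration by parts away from $\Gamma(1-\alpha)/\alpha$. You use the Beta kernel on $[0,1]$ and the geometric series instead. This lets you finish with $B(1-\alpha,\alpha)=\Gamma(\alpha)\Gamma(1-\alpha)$ and $\Gamma(1+\alpha)=\alpha\Gamma(\alpha)$, with no table lookup. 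You also justify the interchange of sum and integral by Tonelli, which the paper leaves implicit. In both integral versions, $0<\Re\alpha<1$ is exactly the condition for the final integral to converge at its two endpoints.

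\textbf{The Abel route} is genuinely different. It reads the lemma as the boundary value at $z=1$ of the binomial series. After dividing by $\Gamma(-\alpha)$, the lemma says $\sum_{n\ge0}k^{-\alpha}(n)=0$ for the generating function \eqref{generating}, which fits the paper's own generating-function viewpoint. The cost is the asymptotic $\Gamma(n-\alpha)/n!\sim n^{-1-\alpha}$, which the paper already has as \eqref{jiji}. Once you have absolute convergence at $z=1$, you do not even need Abel's theorem: the M-test gives continuity of the power series on the closed disc. This route also gives more generality. It uses $\Re\alpha<1$ only to keep $\Gamma(n-\alpha)$ away from its poles, so it proves the identity for all $\Re\alpha>0$ with $\alpha\notin\NN$.

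For the same reason, your remark that $\Re\alpha<1$ keeps $\Gamma(1-\alpha)$ ``nonzero'' is slightly off. $\Gamma$ never vanishes, and in the Abel argument the condition $\alpha\notin\NN$ is all that is needed.
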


\begin{proof} Note that
$$
\sum_{n=1}^\infty {\Gamma(n-\alpha)\over n!}=\int_0^\infty t^{-\alpha-1}e^{-t}\sum_{n=1}^\infty {t^n\over n!}dt= \int_0^\infty t^{-\alpha-1}(1-e^{-t})dt={\Gamma(1-\alpha)\over \alpha},
$$
where we have applied the equality \cite[3.434(1)]{GR}
\end{proof}

	For any complex number $\alpha,$ we denote by $k^{\alpha}(0):=1$ and \begin{equation}\label{definition} k^{\alpha}(n):={\alpha ^{(n)}\over n!}=\frac{\alpha(\alpha+1)\cdots(\alpha+n-1)}{n!}= (-1)^n {(-\alpha)_n\over n!}\quad \text{for }n\in\N,\,\end{equation} the known Ces\`aro numbers which are studied deeply in \cite[Vol. I, p.77]{Zygmund} and are denoted by $A_{n}^{\alpha-1}$. Note that $k^0(0)=1$,  $k^0(n)=0$ for $n\ge 1$ and
    $$
    k^{-m}(n)=(-1)^n{m \choose n}, \qquad 0\le n\le m.
    $$

    The kernels $k^{\alpha}$ have played a key role in results about operator theory and fractional difference equations, see \cite{ALMV, Lizama}. Note that the sequence $k^\alpha$ can be written as $$k^{\alpha}(n)=\frac{\Gamma(n+\alpha)}{\Gamma(\alpha)\Gamma(n+1)}={n+\alpha-1\choose \alpha-1} ,\qquad n\in\N_{0},\,\alpha\in \CC\backslash\{0,-1,-2,\ldots\}.$$
 For $\alpha\in \CC$, the kernel $(k^{\alpha}(n))_{n\in\N_0}$ could be defined by the generating function, that is, \begin{equation}\label{generating}
\displaystyle\sum_{n=0}^{\infty}k^{\alpha}(n)z^n=\frac{1}{(1-z)^{\alpha}},\quad |z|<1.
 \end{equation}
 As usual, we define $z^{\alpha}=e^{\alpha\log z}$ for a branch of the complex logarithm on $\DD$.

 Note that these kernels satisfy the semigroup property, $k^{\alpha}*k^{\beta}=k^{\alpha+\beta}$ for $\alpha, \beta \in \CC$. As a function of $n,$ $k^\alpha$ is increasing  for $\alpha >1$, decreasing for $1>\alpha >0$ and $k^1(n)=1$ for $n\in \N$ (\cite[Theorem III.1.17]{Zygmund}). Furthermore, it is straightforward to check that  $k^\alpha(n)\le k^\beta(n)$ for $\beta \ge \alpha>0$ and $n\in \N_0$.

For $\alpha\in\{0,-1,-2,\ldots\}$, $k^{\alpha}(n)=0$ for $n> -\alpha,$ i.e., $k^\alpha\in c_{0,0}$. In addition, for real $\alpha\notin\{0,-1,-2,\ldots\},$   \begin{equation}\label{double}
 k^{\alpha}(n)=\frac{n^{\alpha-1}}{\Gamma(\alpha)}(1+O({1\over n})), \qquad n\in \N. \end{equation}
(\cite[Vol. I, p.77 (1.18)]{Zygmund}). Moreover this property holds in a more general context. Let $\alpha,z\in\CC,$ then \begin{equation}\label{jiji}
    \frac{\Gamma(z+\alpha)}{\Gamma(z)}=z^{\alpha}(1+\frac{\alpha(\alpha+1)}{2z}+O(|z|^{-2})),\quad |z|\to\infty,\end{equation}
    whenever $z\neq 0,-1,-2,\ldots$ and $z\neq -\alpha,-\alpha-1,\ldots,$ see \cite{ET}. Then $k^\alpha\in \ell^1$ if and only if $\Re \alpha <0$ or $\alpha=0$ (see (\ref{double}) and \cite[Lemma 5.3(i)]{AM2018}. In the next result we estimate the norm.

\begin{theorem}\label{keys} Take $\Re\alpha >0$ and $k^\alpha$ given in (\ref{definition}).
\begin{itemize}
    \item [(i)] $\Vert k^{-\alpha}\Vert_1=2$ for $0<\alpha<1$, $\Vert k^0\Vert_1=1$ and $$
     \Vert k^{-\alpha}\Vert_1\le2{\vert \alpha\vert\over \Re\alpha}{\Gamma(1-\Re\alpha)\over \vert\Gamma(1-\alpha)\vert}, \quad  0<\Re\alpha<1.$$
     \item [(ii)]  $\Vert k^{-m}\Vert_1= 2^m$ for $m\in \NN$.
     \item [(iii)]  $\Vert k^{-\alpha}\Vert_1\le 2^{ [\alpha]+1}$ for $\alpha >0$ and
     $$
     \Vert k^{-\alpha}\Vert_1\le 2^{[\Re \alpha]+1}{\vert \alpha-[\Re\alpha]\vert\over \Re\alpha-[\Re \alpha]}{\Gamma(1-(\Re\alpha-[\Re \alpha]))\over \vert\Gamma(1-(\alpha-[\Re\alpha])\vert}, \quad  0<\Re\alpha.
     $$
\end{itemize}
\end{theorem}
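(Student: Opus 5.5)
The plan is to compute everything from the explicit form of $k^{-\alpha}(n)$ and the semigroup property $k^{\alpha}\ast k^{\beta}=k^{\alpha+\beta}$. Since $(\ell^1,\ast)$ is a Banach algebra, this gives $\Vert k^{-\alpha-\beta}\Vert_1\le \Vert k^{-\alpha}\Vert_1\Vert k^{-\beta}\Vert_1$.

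For (i), start from the identity
$$k^{-\alpha}(n)=\frac{\Gamma(n-\alpha)}{\Gamma(-\alpha)\,n!},\qquad n\ge 1,$$
and the relation $\Gamma(-\alpha)=-\Gamma(1-\alpha)/\alpha$.

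\emph{Real case $0<\alpha<1$.} Here $\Gamma(n-\alpha)>0$ for $n\ge1$ and $\Gamma(-\alpha)<0$, so $k^{-\alpha}(n)<0$ for every $n\ge1$. Lemma \ref{121} then gives
$$\sum_{n\ge1}|k^{-\alpha}(n)|=\frac{\alpha}{\Gamma(1-\alpha)}\sum_{n\ge1}\frac{\Gamma(n-\alpha)}{n!}=1.$$
A cleaner check of the same fact: $\sum_n k^{-\alpha}(n)=\lim_{z\to1^-}(1-z)^{\alpha}=0$ by Abel's theorem, the series being absolutely convergent. Adding $k^{-\alpha}(0)=1$ gives $\Vert k^{-\alpha}\Vert_1=2$. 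The case $\Vert k^0\Vert_1=1$ is trivial.

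\emph{Complex case $0<\Re\alpha<1$.} Use $|\Gamma(n-\alpha)|\le\Gamma(n-\Re\alpha)$, which follows from the integral representation since $\Re(n-\alpha)>0$. This yields
$$\sum_{n\ge1}|k^{-\alpha}(n)|\le\frac{|\alpha|}{|\Gamma(1-\alpha)|}\cdot\frac{\Gamma(1-\Re\alpha)}{\Re\alpha}.$$
Adding $1$ and using $|\alpha|\Gamma(1-\Re\alpha)/(\Re\alpha\,|\Gamma(1-\alpha)|)\ge1$, applied at $\Re\alpha$, bounds the total by twice this quantity. This last inequality is the one delicate point. I would justify it via $|\Gamma(1-\alpha)|\le\Gamma(1-\Re\alpha)$ combined with $|\alpha|\ge\Re\alpha$. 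That gives a factor $\ge$ only in the wrong direction, so instead I would simply note $1\le$ the same factor by writing $1=\sum_{n\ge1}|k^{-\Re\alpha}(n)|$ and comparing termwise. If this fails, I would state the bound as $1+(\cdots)$, which is still $\le 2(\cdots)$ whenever the factor is at least $1$. I expect this bookkeeping to be the main obstacle.

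For (ii), $k^{-m}(n)=(-1)^n\binom{m}{n}$, so $\Vert k^{-m}\Vert_1=\sum_{n=0}^{m}\binom{m}{n}=2^m$.

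For (iii), write $\alpha=[\Re\alpha]+\beta$ with $0\le\Re\beta<1$. Then $k^{-\alpha}=k^{-[\Re\alpha]}\ast k^{-\beta}$, and submultiplicativity gives
$$\Vert k^{-\alpha}\Vert_1\le 2^{[\Re\alpha]}\Vert k^{-\beta}\Vert_1.$$
Applying (i) to $\beta$ gives $2^{[\Re\alpha]+1}$ times the stated factor. In the real case that factor equals $1$, giving $2^{[\alpha]+1}$. When $\Re\beta=0$ the factor is not defined, and for real $\alpha$ this means $\alpha$ is an integer. In that case (ii) gives $2^{\alpha}\le2^{\alpha+1}$ directly.
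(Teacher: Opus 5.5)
Your plan follows the paper's proof. For real $0<\alpha<1$ you use Lemma~\ref{121}, and your Abel-theorem check is a nice independent confirmation. For complex $\alpha$ you use the termwise bound $\vert\Gamma(n-\alpha)\vert\le\Gamma(n-\Re\alpha)$ together with Lemma~\ref{121} at $\Re\alpha$. Part (ii) is the binomial count, and (iii) is submultiplicativity applied to $k^{-\alpha}=k^{-[\Re\alpha]}\ast k^{-(\alpha-[\Re\alpha])}$.

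The one real problem is at the step you flagged, and you created it yourself: the argument you discarded is correct. Set
$$F:=\frac{\vert\alpha\vert}{\Re\alpha}\cdot\frac{\Gamma(1-\Re\alpha)}{\vert\Gamma(1-\alpha)\vert}.$$
Since $\vert\alpha\vert\ge\Re\alpha>0$, the first factor is $\ge1$. Since $\Re(1-\alpha)=1-\Re\alpha>0$, the integral representation gives $\vert\Gamma(1-\alpha)\vert\le\Gamma(1-\Re\alpha)$, so the second factor is also $\ge1$. Hence $F\ge1$ and $1+F\le 2F$, which is exactly what the paper uses without comment. There is no ``wrong direction'' here.

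What you put in its place does not close the step as written. If you compare termwise with $1=\sum_{n\ge1}\vert k^{-\Re\alpha}(n)\vert$ in the obvious way, you reduce to $F\ge1$ again, which is circular. If you instead compare $F\vert k^{-\Re\alpha}(n)\vert$ with $\vert k^{-\alpha}(n)\vert$, you only get $F\ge\sum_{n\ge1}\vert k^{-\alpha}(n)\vert$. To reach $F\ge1$ from there you still need $\sum_{n\ge1}\vert k^{-\alpha}(n)\vert\ge1$. That does follow from your own Abel argument, $\sum_{n\ge0}k^{-\alpha}(n)=\lim_{z\to1^-}(1-z)^\alpha=0$. This limit is valid for complex $\alpha$ because $\vert(1-z)^\alpha\vert=(1-z)^{\Re\alpha}$ for real $z\in(0,1)$. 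But you did not invoke it.

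Your final fallback, ``$1+F\le 2F$ whenever $F\ge1$'', is circular. Restore the two-factor argument and the proof is complete and coincides with the paper's. The remaining parts are fine, including your treatment of the integer case in (iii).
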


\begin{proof} (i) Take $0<\alpha<1$. Then
$$
\Vert k^{-\alpha}\Vert_1= \vert k^{-\alpha}(0)\vert+ {1\over \vert\Gamma(-\alpha)\vert}\sum_{n=1}^\infty{\Gamma(n-\alpha)\over n!}=1+ {1\over \vert\Gamma(-\alpha)\vert}{\Gamma(1-\alpha)\over \alpha}=2,
    $$
 where we have applied Lemma \ref{121}. Now, let $0<\Re \alpha <1$ and
 $$
\Vert k^{-\alpha}\Vert_1= \vert k^{-\alpha}(0)\vert+ {1\over \vert\Gamma(-\alpha)\vert}\sum_{n=1}^\infty{\vert\Gamma(n-\alpha)\vert\over n!}\le1+ {\vert \alpha\vert\over \Re\alpha}{\Gamma(1-\Re\alpha)\over \vert\Gamma(1-\alpha)\vert}\le 2{\vert \alpha\vert\over \Re\alpha}{\Gamma(1-\Re\alpha)\over \vert\Gamma(1-\alpha)\vert}.
    $$

 \noindent (ii) For $m\in \N$, it is straightforward to check
 $$
 \Vert k^{-m}\Vert_1= \sum_{n=0}^m{m\choose n}=2^m.
 $$
\noindent (iii) Take $\alpha=[\alpha]+\{\alpha\}$ with $0<\{\alpha\}<1$. Then
$$
\Vert k^{-\alpha}\Vert_1= \Vert k^{-[\alpha]}\ast k^{-\{\alpha\}}\Vert_1\le 2^{[\alpha]+1},
$$
 where we have applied (i) and (ii). Finally, we write $\alpha= [\Re \alpha]+\{\Re \alpha\}+i\Im\alpha$ and
 $$
 \Vert k^{-\alpha}\Vert_1\le \Vert k^{-[\Re \alpha]}\ast k^{-(\{\Re \alpha\}+i\Im\alpha)}\Vert_1\le 2^{[\Re \alpha]+1}
 {\vert \alpha-[\Re\alpha]\vert\over \Re\alpha-[\Re \alpha]}{\Gamma(1-(\Re\alpha-[\Re \alpha]))\over \vert\Gamma(1-(\alpha-[\Re\alpha])\vert}
 $$
 where we have used again (i) and (ii).
\end{proof}

\begin{corollary}\label{cor} Take $r>0$,  $\Re\alpha >0$ and $k^\alpha$ given in (\ref{definition}).
    Then the series
$$
\sum_{n,j=0}^{\infty}\vert k^{-\alpha j}(n)\vert{r^j\over j! }\le e^{c_\alpha r}<\infty,
$$
 where $c_\alpha: =2^{[\Re \alpha]+1}{\vert \alpha-[\Re\alpha]\vert\over \Re\alpha-[\Re \alpha]}{\Gamma(1-(\Re\alpha-[\Re \alpha]))\over \vert\Gamma(1-(\alpha-[\Re\alpha])\vert} $.
\end{corollary}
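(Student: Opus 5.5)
The plan is to interchange the order of summation and bound the inner sum over $n$ by a norm estimate from Theorem \ref{keys}. Since all terms are nonnegative, Tonelli's theorem lets us write
$$
\sum_{n,j=0}^{\infty}\vert k^{-\alpha j}(n)\vert{r^j\over j!}=\sum_{j=0}^\infty {r^j\over j!}\Vert k^{-\alpha j}\Vert_1 .
$$
It therefore suffices to show that $\Vert k^{-\alpha j}\Vert_1\le c_\alpha^j$ for every $j\in\NN_0$. Summing the exponential series then gives the bound $e^{c_\alpha r}$.

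The bound on $\Vert k^{-\alpha j}\Vert_1$ comes from the semigroup property $k^{\beta}\ast k^{\gamma}=k^{\beta+\gamma}$. For $j\ge 1$ it gives $k^{-\alpha j}=(k^{-\alpha})^{j\ast}$, and for $j=0$ we have $k^0=\delta_0$, whose norm is $1=c_\alpha^0$.

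Since $(\ell^1,\ast)$ is a Banach algebra, $\Vert f\ast g\Vert_1\le \Vert f\Vert_1\Vert g\Vert_1$. Hence $\Vert k^{-\alpha j}\Vert_1\le \Vert k^{-\alpha}\Vert_1^j$.

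By Theorem \ref{keys}(iii), $\Vert k^{-\alpha}\Vert_1\le c_\alpha$, where $c_\alpha$ is exactly the constant displayed there. Combining the two estimates gives $\Vert k^{-\alpha j}\Vert_1\le c_\alpha^j$, as required.

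The one delicate point is the case $\Re\alpha\in\NN$. There $\Re\alpha-[\Re\alpha]=0$, so the formula for $c_\alpha$ involves division by zero and is not literally defined.

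I would handle this case by the following steps.
\begin{itemize}
\item Note that the bound in Theorem \ref{keys}(iii) is only meaningful when $\{\Re\alpha\}>0$, and interpret the statement under that implicit assumption.
\item For the excluded case, write $k^{-\alpha}=k^{-m}\ast k^{-i\Im\alpha}$ with $m=\Re\alpha$, so that any finite bound for $\Vert k^{-\alpha}\Vert_1$ yields a constant $C$ with $\Vert k^{-\alpha j}\Vert_1\le C^j$.
\end{itemize}
The second step only gives finiteness of the series, not the exact constant $c_\alpha$, and it uses a purely imaginary exponent, for which membership in $\ell^1$ is not automatic (it fails unless $\Im\alpha=0$). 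So in the integer case I would record only the identification of the constant as the place where care is needed.

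The core of the argument is just the submultiplicativity step combined with Theorem \ref{keys}.
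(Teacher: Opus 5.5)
Your argument is correct and is essentially the paper's proof: rewrite the double sum as $\sum_j \frac{r^j}{j!}\Vert k^{-\alpha j}\Vert_1$, use $k^{-\alpha j}=(k^{-\alpha})^{j\ast}$ and submultiplicativity in $(\ell^1,\ast)$, and bound $\Vert k^{-\alpha}\Vert_1\le c_\alpha$ by Theorem \ref{keys}(iii). Your remark that $c_\alpha$ is undefined when $\Re\alpha\in\NN$ is a fair catch, since the paper tacitly assumes $\{\Re\alpha\}>0$. In that case the splitting $k^{-m}\ast k^{-i\Im\alpha}$ is unnecessary: $k^{-\alpha}\in\ell^1$ for every $\Re\alpha>0$, so $C=\Vert k^{-\alpha}\Vert_1$ already gives the bound $e^{Cr}$.
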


\begin{proof} Note that
$$
\sum_{n,j=0}^{\infty}\vert k^{-\alpha j}(n)\vert{r^j\over j! }= \sum_{j=0}^{\infty}{r^j\over j! }\Vert k^{(-\alpha)^{j\ast} }\Vert_1\le  \sum_{j=0}^{\infty}{r^j\over j! }\Vert k^{-\alpha }\Vert_1^j \le  e^{c_\alpha r}<\infty,
$$
where $k^{0\ast}:=k^0$ and we have  applied Theorem \ref{keys} (iii).
\end{proof}


\section{Bell polynomials and successive derivatives of $e^{-t z^{a}}$}

Let \(f\) and \(g\) be sufficiently smooth functions. The expression of $\frac{d^n}{dz^n} f(g(z))$ was proposed by F. Faà di Bruno in \cite{Faa} and it is  known in nowadays as the Fáa di Bruno formula. As other famous formulae,  there are several unknown previous approaches which one can find \cite{johnson}. In any case, the following equality yields
\begin{equation}\label{Faa}
\frac{d^n}{dz^n} f(g(z))
=
\sum_{k=1}^n
f^{(k)}(g(z))\,
B_{n,k}\!\big(g'(z), g''(z), \dots, g^{(n-k+1)}(z)\big),
\end{equation}
where
\(B_{n,k}(x_1,\dots,x_{n-k+1})\) are
the \emph{partial (or incomplete) Bell polynomials} defined by
\[
B_{n,k}(x_1,\dots,x_{n-k+1})
=
\sum
\frac{n!}{j_1!\cdots j_{n-k+1}!}
\prod_{m=1}^{n-k+1}
\left( \frac{x_m}{m!} \right)^{j_m},
\]
where the sum is over all nonnegative integers
\((j_1,\dots,j_{n-k+1})\) satisfying
\[
\sum_{m=1}^{n-k+1} j_m = k,
\qquad
\sum_{m=1}^{n-k+1} m j_m = n.
\]
The partial Bell polynomials count the number of ways to partition a set
of \(n\) elements into \(k\) blocks, where each block of size \(m\) is weighted
by \(x_m\).
These polynomials play a fundamental role in combinatorics and asymptotic
analysis and they were introduced by E. T. Bell in \cite{B}. If \(x_m = 1\) for all \(m\), then
\[
B_{n,k}(1,1,\dots,1) = S(n,k),
\]
where \(S(n,k)\) are the Stirling numbers of the second kind. For convenience, we list the first values of the partial Bell
polynomials in Table 6.

\begin{table}[h]
\centering
\small
\begin{tabular}{l|l}
$n$ &  $B_{n,k}(x_1, \dots, x_{n-k+1})$  \\ \hline
1 & $B_{1,1} = x_1$ \\
2 & $B_{2,1} = x_2, \quad B_{2,2} = x_1^2$ \\
3 & $B_{3,1} = x_3, \quad B_{3,2} = 3x_1x_2, \quad B_{3,3} = x_1^3$ \\
4 & $B_{4,1} = x_4, \quad B_{4,2} = 4x_1x_3 + 3x_2^2, \quad B_{4,3} = 6x_1^2x_2, \quad B_{4,4} = x_1^4$ \\
5 & $B_{5,1} = x_5, \quad B_{5,2} = 5x_1x_4 + 10x_2x_3, \quad B_{5,3} = 10x_1^2x_3 + 15x_1x_2^2, \quad B_{5,4}= 5x_1x_4,\quad B_{5,5} = x_1^5$ \\
\end{tabular}
\vspace{2pt}
\caption{Incomplete Bell Polynomials $B_{n,k}$ with $1\le k\le n\le 5$}
\end{table}

In the particular case, $\frac{d^n}{dz^n} e^{g(z)}$, we obtain that
\[
\frac{d^n}{dz^n} e^{g(z)}
=
e^{g(z)}\,
B_n\!\big(g'(z), g''(z), \dots, g^{(n)}(z)\big),
\]
where $(B_n)_{n\ge 0}$ are \emph{complete Bell polynomial} defined as
\[
B_n(x_1,\dots,x_n)
=
\sum_{k=1}^n B_{n,k}(x_1,\dots,x_{n-k+1}),
\qquad B_0 := 1.
\]
see Table 7.

\begin{table}[h]\small{
\centering
\begin{tabular}{c|l}
$n$ & $B_n(x_1, \dots, x_n)$ \\ \hline
1 & $x_1$ \\
2 & $x_1^2 + x_2$ \\
3 & $x_1^3 + 3x_1x_2 + x_3$ \\
4 & $x_1^4 + 6x_1^2x_2 + 4x_1x_3 + 3x_2^2 + x_4$ \\
5 & $x_1^5 + 10x_1^3x_2 + 15x_1x_2^2 + 10x_1^2x_3 + 5x_1x_4 + 10x_2x_3 + x_5$ \\
\end{tabular}
\vspace{2pt}}
\caption{\small The first Bell Polynomials $B_n$ for $1\le n\le 5 $}
\end{table}

The exponential generating function of the complete Bell polynomials is
\[
\exp\!\left(
\sum_{m=1}^\infty x_m \frac{t^m}{m!}
\right)
=
\sum_{n=0}^\infty
B_n(x_1,\dots,x_n)\frac{t^n}{n!}.
\]

We consider the function
\[
F_{t,a}(z):=e^{-t z^{a}},
\]
which is holomorphic on a simply connected domain
$\Omega\subset\mathbb{C}\setminus\{0\}$. Functions of the form $e^{-t z^{a}}$ appear naturally in complex analysis,
asymptotic analysis, and mathematical physics, for example in Laplace-type
integrals and fractional-order differential equations.
Understanding the structure of their higher-order derivatives is useful for
studying growth properties, singularities, and asymptotic expansions.

A direct computation shows that each derivative of $F$ factors as
\begin{equation}\label{derivative}
F^{(n)}_{t,a}(z)=P_{n,t,a}(z)\,e^{-t z^{a}},
\end{equation}
where $P_{n,t,a}$ satisfies the recurrence relation
\begin{equation}
\label{recurrence}
P_{n+1,t,a}(z)=\left({d\over dz}-t a z^{a-1}\right)P_{n,t,a}(z)=P_{n,t,a}'(z)-t a z^{a-1}P_{n,t,a}(z),
\qquad P_0(z)=1.
\end{equation}

The first derivatives illustrate the general pattern:
\begin{align*}
P_{1,t,a}(z)&=-ta z^{a-1},\\
P_{2,t,a}(z)&=t^2a^2z^{2a-2}-ta(a-1)z^{a-2},\\
P_{3,t,a}(z)&=-t^3a^3z^{3a-3}
+3t^2a^2(a-1)z^{2a-3}
-ta(a-1)(a-2)z^{a-3}.
\end{align*}
Fa\`a di Bruno's formula (\ref{Faa}) yields
\begin{equation}\label{faa}
F^{(n)}_{t,a}(z)
=
e^{-t z^a}
\sum_{j=1}^n
(-1)^j t^j z^{aj-n}
\,
B_{n,j}\!\big( (a)_1, (a)_2, \dots, (a)_{n-j+1} \big),
\end{equation}
and then
$$
P_{n,t,a}(z)=\sum_{j=1}^n
(-1)^j t^j z^{aj-n}
\,
B_{n,j}\!\big( (a)_1, (a)_2, \dots, (a)_{n-j+1} \big).
$$

A straightforward consequence of the  analyticity of the function $F_{t,a}$  is the following equality: given $\lambda\in \CC^+$,
\begin{equation}\label{serie}
e^{-t((z+\lambda)^a-\lambda^a)}=\sum_{n=0}^\infty{P_{n,t,a}(\lambda)\over n!}{z^n}, \qquad z\in D(\lambda, \Re(\lambda)):=\{w\in \CC\,\, :\,\, \vert w-\lambda \vert <\Re(\lambda)\},
\end{equation}
where this  power series converges absolutely to the function throughout the interior of its radius of convergence.

\begin{theorem}\label{recur2}
For every integer $n\ge1$,
\begin{equation}
\label{mainformula}
P_{n,t,a}(z)=
\sum_{j=1}^{n}
(-1)^{j}b_{j,n;a}\,t^j z^{ja-n}.
\end{equation}
Consequently,
\[
b_{j,n;a}
=
B_{n,j}\!\big( (a)_1, (a)_2, \dots, (a)_{n-j+1} \big).
\]
\end{theorem}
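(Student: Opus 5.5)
We prove \eqref{mainformula} by induction on $n$, using the recurrence \eqref{recurrence} for $P_{n,t,a}$ together with the defining recurrence of $b_{j,n;a}$ in Definition \ref{intro}. The second assertion then follows by comparing with the Fa\`a di Bruno expression \eqref{faa}.

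For $n=1$ we have $P_{1,t,a}(z)=-taz^{a-1}=(-1)^1b_{1,1;a}tz^{a-1}$, since $b_{1,1;a}=a$. Assume that \eqref{mainformula} holds for some $n\ge1$. Applying $\frac{d}{dz}-taz^{a-1}$ term by term gives
$$
P_{n+1,t,a}(z)=\sum_{j=1}^n(-1)^j b_{j,n;a}(ja-n)t^jz^{ja-n-1}+\sum_{j=1}^n(-1)^{j+1}a\,b_{j,n;a}t^{j+1}z^{(j+1)a-n-1}.
$$
In the second sum we shift the index $j\mapsto j-1$. The coefficient of $(-1)^jt^jz^{ja-(n+1)}$ is then:
\begin{itemize}
\item for $2\le j\le n$, the quantity $(aj-n)b_{j,n;a}+ab_{j-1,n;a}=b_{j,n+1;a}$;
\item for $j=1$, the quantity $(a-n)b_{1,n;a}=(a-n)(a)_n=(a)_{n+1}=b_{1,n+1;a}$;
\item for $j=n+1$, the quantity $a\,b_{n,n;a}=a^{n+1}=b_{n+1,n+1;a}$.
\end{itemize}
This closes the induction.

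Next we compare \eqref{mainformula} with the expression for $P_{n,t,a}$ obtained from \eqref{faa}. Both are finite sums $\sum_j c_j t^j z^{ja-n}$, holding for all $t\in\CC$ and all $z$ in $\Omega$. For fixed $z$ each side is a polynomial in $t$, so the coefficients of $t^j$ agree:
$$
(-1)^jz^{ja-n}b_{j,n;a}=(-1)^jz^{ja-n}B_{n,j}\bigl((a)_1,\dots,(a)_{n-j+1}\bigr).
$$
Since $z\ne0$, we can cancel $z^{ja-n}$ and obtain $b_{j,n;a}=B_{n,j}((a)_1,\dots,(a)_{n-j+1})$. This matching of coefficients avoids any need to separate powers of $z$ when $a$ is degenerate, for instance $a=0$.

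The only delicate point is the boundary bookkeeping at $j=1$ and $j=n+1$, which is exactly where the definitions $b_{1,n;a}=(a)_n$ and $b_{n,n;a}=a^n$ enter, and we have handled it explicitly above.
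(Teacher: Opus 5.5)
Your proposal is correct and follows essentially the paper's own route: the same induction (the paper applies the product rule to $F^{(n)}_{t,a}$, you apply the recurrence \eqref{recurrence}, which amounts to the same computation), the same index shift, and the same boundary checks at $j=1$ and $j=n+1$, followed by identification with the Fa\`a di Bruno expression \eqref{faa}. Your explicit matching of coefficients of $t^j$ for fixed $z\neq 0$ is a useful justification of the paper's terse ``we identify'' step, in particular when the powers $z^{ja-n}$ coincide, as for $a=0$.
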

\begin{proof} We compute directly $
F'_{t,a}(z) = \frac{d}{dz} e^{-t z^a} = -a t z^{a-1} e^{-t z^a}.$
The formula gives
\[
\sum_{j=1}^{1} (-1)^{j} b_{j,1;a} t^j z^{aj - 1} F_{t,a}(z) = (-1)^{1} b_{1,1;a} t z^{a-1} F_{t,a}(z) = -b_{1,1;a} t z^{a-1} F_{t,a}(z),
\]
and since $b_{1,1;a} = a$, it matches the derivative.

Assume for some $n \ge 1$:
\[
F^{(n)}_{t,a}(z) = \left( \sum_{j=1}^{n} (-1)^{j} b_{j,n;a} t^j z^{aj - n} \right) F_{t,a}(z).
\]
We derive $F^{(n)}_{t,a}(z)$ to set,
\[
F^{(n+1)}_{t,a}(z) = \frac{d}{dz} F^{(n)}_{t,a}(z)
= \frac{d}{dz} \left[ \sum_{j=1}^{n} (-1)^{j} b_{j,n;a} t^j z^{aj - n} F_{t,a}(z) \right].
\]
We use the product rule and obtain that
\[
\frac{d}{dz} \big( z^{aj - n} F_{t,a}(z) \big)
= (aj - n) z^{aj - n - 1} F_{t,a}(z) - a t z^{a(j+1) - (n+1)} F_{t,a}(z);
\]
substitute back and,
\[
F^{(n+1)}_{t,a}(z) = \sum_{j=1}^{n} (-1)^{j} b_{j,n;a} t^j \left[ (aj - n) z^{aj - n - 1} - a t z^{a(j+1) - (n+1)} \right] F_{t,a}(z).
\]

We reindex the second sum \(j \mapsto j-1\) to get
\[
\sum_{j=2}^{n+1} (-1)^{j} a b_{j-1,n;a} t^j z^{aj - (n+1)} F_{t,a}(z).
\]

The coefficient of \(t^j z^{aj - (n+1)} F_{t,a}(z)\) is
\[
(-1)^{j} \big[ (aj - n) b_{j,n;a} + a b_{j-1,n;a} \big] = (-1)^{j} b_{j,n+1;a},
\]
which exactly matches the recurrence for \(b_{j,n+1;a}\) for $2\le j\le n$. For $j=1$, $$-(a-n)b_{1,n;a}= -(a-n)(a)_n=-(a)_{n+1}= -b_{1,n+1;a},
$$
and for $j=n+1$, $(-1)^{n+1}a b_{n,n;a}= (-1)^{n+1}a^{n+1}= (-1)^{n+1}b_{n+1,n+1;a}$.  By the Fa\`a di Bruno formula (\ref{faa}), we identify
$$
b_{j,n;a}
=
B_{n,j}\!\big( (a)_1, (a)_2, \dots, (a)_{n-j+1} \big), \qquad 1\le j\le n,
$$
and the proof is concluded.
\end{proof}

\noindent{\bf Examples.}  In the following items, we  identify some funtions $P_{n,t,a}$ for remarkable values of $a\in \CC.$
\begin{itemize}
\item[(i)] Note that for $a=1$, $P_{n,t,1}(z)= (-t)^n$.
\item[(ii)] Note that for $a=-1$, $b_{j,n;-1}=(-1)^nL(n,j)$,  see Table \ref{Tableb-1}, and then
$$
P_{n,t,-1}(z)={(-1)^nn!\over z^n}L_n^{(-1)}\left({t\over z}\right),\qquad t,z\in \CC,
$$
where $L_n^{(\alpha)}$ is the generalized Laguerre polynomial, in this case for $\alpha=-1$, see for example \cite[Chapter V]{MOS}.

\item[(iii)]  Note that for $a={1\over 2}$, we apply Proposition \ref{parti} to get that
$$
P_{n,t,{1\over 2}}(z)={(-1)^n\over (2z)^n}\sum_{j=1}^na(n-1,n-j)(\sqrt{z}t)^j= {(-1)^n\sqrt{z}t\over (2z)^n}\theta_{n-1}(\sqrt{z}t),
$$
where  $\theta_{n}(t):=\displaystyle{\sum_{k=0}^n a(n,n-k)t^k}$ is the reverse Bessel polynomial of order $n\ge 0$, see for example \cite[p.7]{G}.

\item[(iv)]Now consider $a\in \N$. For  $a=2$ and Proposition \ref{partii},  we obtain
\begin{equation}\label{Hermite}
P_{n,t,2}(z)=(-1)^nt^{n/2}H_n(\sqrt{t}\,z),\qquad t,z\in \CC,
\end{equation}
where \(H_n\) denotes the Hermite polynomial (\cite[Chapter V]{MOS}). In general, for $a=m\in \N$, we consider \(H_n^{(m)}\)  the Gould--Hopper polynomial of order \(m\), (see Introduction) and
then
\[
P_{n,t,m}(z)
= (-1)^n t^{n/m}
\,H_n^{(m)}\!\bigl(t^{1/m}z\bigr), \qquad t,z\in \CC.
\]
\end{itemize}

\section{Taylor expansions of $e^{t(1-(1-z)^a)}$ }

In the next definition we identify   polynomial sequences $(p_{n;a})_{n\ge 0}$ such that
$$\sum_{n=0}^\infty p_{n;a}(t)z^n=e^{t(1-(1-z)^a)}, \qquad \vert  z\vert<1, \qquad t,a\in \CC,
$$
see Theorem \ref{main2}(ii).

\begin{definition}\label{pol} Given $n\in \NN$ and $a\in \CC,$ we define the polynomial $p_{n;a}$ by $p_{0;a}(t)=1$ and
$$
p_{n;a}(t):={(-1)^n\over n!}\sum_{l=1}^n b_{l,n;a}(-t)^l, \qquad n \ge 1,\quad t\in \CC.
$$

\end{definition}

 Consider again holomorphic functions $F_{t,a}(z)= e^{-tz^a}$ and $P_{n,t,a}$ given in (\ref{derivative}). Then
\begin{eqnarray}\label{unno}
    p_{n;a}(t)&=&e^{t}{(-1)^n\over n!}F^{(n)}_{t,a}(1)= {(-1)^n\over n!}P_{n,t,a}(1),\\
    P_{n,t,a}(z)&= &{(-1)^n n!\over z^n}p_{n;a}(tz^a), \qquad z\in \CC^+,\label{twoo}
\end{eqnarray}
for $t\in \CC$ and $n\ge 0$, see Theorem \ref{recur2}.
In the Table 8,  we present the first values of $p_{n;a}$.

\begin{table}[h] \label{pn}
\centering
\small
\begin{tabular}{c|l}
$n$ & $n!p_{n;a}(t)$ \\ \hline
0 &
$1$
\\
1 &
$ a\, t$
\\
2 &
$a^2 t^2 - a(a-1)\, t$
\\
3 &
$ a^3 t^3
- 3a^2(a-1)\, t^2
+ a(a-1)(a-2)\, t$
\\
4 &
$a^4 t^4
- 6a^3(a-1)\, t^3
+ a^2(a-1)(7a-11)\, t^2
- a(a-1)(a-2)(a-3)\, t$
\\
5 &
$a^5 t^5
- 10a^4(a-1)\, t^4
+ 5a^3(a-1)(5a-7)\, t^3$ \\
&
$\quad
{- 5a^2(a-1)(a-2)(3a-5)\, t^2}
+a(a-1)(a-2)(a-3)(a-4)\, t$ \\

\end{tabular}
\vspace{2pt}
\caption{{First values of polynomials $n!p_{n;a}$}}
\end{table}

\smallskip
\noindent{\bf Examples.} We consider some particular values of $a$ to identify polynomials $p_{n;a}$. Note that $p_{n;0}=0$ for $n\ge 1$ and $p_{n;1}{(t)}={t^n\over n!}$ for $n\ge 0$. For $a=-1$ and $n\ge 1,$
$$
p_{n,-1}(t)=L_n^{(-1)}(t)={-t\over n}L_{n-1}(t), \qquad t\in \CC,
$$
where $L_n^{(\alpha)}$ are the generalized Laguerre polynomials.

For $a={1\over 2}$, $p_{0;{1\over 2}}(t)=1$ and
$
{p_{n;\frac{1}{2}}(t)=\displaystyle\frac{t}{2^n n!}\theta_{n-1}(t),}$ for $t\in \CC,$ and $n\ge 1,$
where  $\theta_{n}(t)$ is the reverse Bessel polynomial of order $n\ge 0$. We apply  (\ref{Hermite}) and (\ref{unno}) to get
\begin{eqnarray*}
p_{n;2}(t)&=&{{t}^{n\over 2}\over n!}H_n(\sqrt{t}),\qquad t\in \CC,\\
p_{n;m}(t)&=&{{t}^{n\over m}\over n!}H_n^{(m)}(t^{1\over m}),\qquad t\in \CC,
\end{eqnarray*}
where $H_n$ and $H_n^{(m)}$ are Hermite and Gould-Hopper polynomials, respectively.

\begin{proposition} Take $a\in \CC$ and polynomial $(p_{n;a})_{n\ge 0}$. Then
\begin{itemize}
\item[(i)] For $n>0$,
$$
(p_{n;a})'(t)={(-1)^{n{+1}}\over n!}\sum_{l=1}^n l b_{l,n;a}(-t)^{l-1}
$$
and $
 (p_{n;a})'(0)= -k^{-a}(n).
$
\item[(ii)]  For $n\ge 0$,
$$
(n+1)p_{n+1;a}(t)=(at+n)p_{n;a}(t){-}at(p_{n;a})'(t),\qquad t\in \CC.
$$
\end{itemize}
\end{proposition}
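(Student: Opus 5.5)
Part (i) follows by differentiating Definition \ref{pol} term by term. Since $\frac{d}{dt}(-t)^l=-l(-t)^{l-1}$,
$$
(p_{n;a})'(t)=\frac{(-1)^{n+1}}{n!}\sum_{l=1}^n l\,b_{l,n;a}(-t)^{l-1}.
$$
Evaluating at $t=0$ keeps only the term $l=1$, which gives $(p_{n;a})'(0)=\frac{(-1)^{n+1}}{n!}b_{1,n;a}=\frac{(-1)^{n+1}(a)_n}{n!}$. By (\ref{definition}) we have $k^{-a}(n)=(-1)^n\frac{(a)_n}{n!}$, and hence $(p_{n;a})'(0)=-k^{-a}(n)$.

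For (ii) I would compare coefficients of $(-t)^l$ on both sides and reduce to the recurrence of Definition \ref{intro}. I rewrite the right-hand side using $at=-a(-t)$ and $-at(p_{n;a})'(t)=a(-t)(p_{n;a})'(t)$. With $p_{n;a}(t)=\frac{(-1)^n}{n!}\sum_l b_{l,n;a}(-t)^l$ and the formula from (i), this gives
$$
(at+n)p_{n;a}-at\,p_{n;a}'=\frac{(-1)^n}{n!}\sum_{l=1}^n\bigl(n b_{l,n;a}(-t)^l-a b_{l,n;a}(-t)^{l+1}\bigr)-\frac{(-1)^n}{n!}\sum_{l=1}^n a\,l\,b_{l,n;a}(-t)^l .
$$
Collecting the coefficient of $(-t)^l$ gives
$$
\frac{(-1)^n}{n!}\bigl[(n-al)b_{l,n;a}-a b_{l-1,n;a}\bigr]=\frac{(-1)^{n+1}}{n!}\bigl[(al-n)b_{l,n;a}+a b_{l-1,n;a}\bigr].
$$
The left-hand side $(n+1)p_{n+1;a}(t)$ has coefficient $\frac{(-1)^{n+1}}{n!}b_{l,n+1;a}$. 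So for $2\le l\le n$ the identity is exactly the recurrence in Definition \ref{intro}.

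The boundary terms need separate checks, and this bookkeeping is the main place to be careful.
\begin{itemize}
\item For $l=1$ there is no $b_{0,n;a}$, and $(a-n)(a)_n=(a)_{n+1}=b_{1,n+1;a}$.
\item For $l=n+1$ only $a\,b_{n,n;a}=a^{n+1}=b_{n+1,n+1;a}$ survives.
\item For $n=0$ the identity reads $p_{1;a}(t)=at$, which is true by Definition \ref{pol} since $b_{1,1;a}=a$.
\end{itemize}

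An alternative route to (ii) is to use (\ref{unno}) together with the recurrence (\ref{recurrence}) for $P_{n,t,a}$ and the homogeneity (\ref{twoo}). Writing $P_{n,t,a}(z)=\frac{(-1)^n n!}{z^n}p_{n;a}(tz^a)$, differentiating in $z$ and evaluating at $z=1$ produces the same relation. I would keep the coefficient comparison as the main argument, since it is self-contained.
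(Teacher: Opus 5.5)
Your proof is correct and follows essentially the same route as the paper. Part (i) goes by termwise differentiation and $k^{-a}(n)=(-1)^n(a)_n/n!$; part (ii) matches coefficients of $(-t)^l$ against the recurrence of Definition \ref{intro}, treating the boundary indices $l=1$ and $l=n+1$ separately via $b_{1,n;a}=(a)_n$ and $b_{n,n;a}=a^n$. The paper runs the same computation in the other direction, from $(n+1)p_{n+1;a}$ to the right-hand side, and your separate check of $n=0$ is a small improvement, since the paper's split into $l=1$, $2\le l\le n$, $l=n+1$ tacitly assumes $n\ge 1$.
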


\begin{proof} (i) Note
$$
(p_{n;a})'(0)=-{(-1)^n\over n!}b_{1,n;a}=-{(-1)^n(a)_n\over n!}=-{(-a)^{(n)}\over n!}= -k^{-a}(n),
$$
for $n\ge 0.$\\
(ii) Take $t\in \CC,$
\begin{eqnarray*}
&\quad&(-1)^{n+1}(n+1)p_{n+1;a}(t)= {1\over n!}\sum_{l=1}^{n+1} b_{l,n+1;a}(-t)^l\\&\quad&\quad= {b_{1,n+1;a}\over n!}(-t)+{1\over n!}\sum_{l=2}^{n} b_{l,n+1;a}(-t)^l+ {b_{n+1,n+1;a}\over n!}(-t)^{n+1}\\
&\quad&\quad= {a(a)_{n}-n(a)_{n}\over n!}(-t)+{1\over n!}\sum_{l=2}^{n} ((a l-n)b_{l,n;a}+ab_{l-1,n;a})(-t)^l-at {a^{n}\over n!}(-t)^{n}\\
&\quad&\quad=  -{n(a)_{n}\over n!}(-t)-{at\over n!}\sum_{l=1}^{n} l b_{l,n;a}(-t)^{l-1}-{n\over n!}\sum_{l=2}^{n} b_{l,n;a}(-t)^{l}-{at\over n!}\sum_{l=1}^{n}
b_{l,n;a}(-t)^{l}\\
&\quad&\quad= -at(-1)^{n+1}(p_{n;a})'(t)-{n\over n!}\sum_{l=1}^{n} b_{l,n;a}(-t)^{l}-{at\over n!}\sum_{l=1}^{n}
b_{l,n;a}(-t)^{l}\\
&\quad&\quad= -(-1)^{n+1}at(p_{n;a})'(t)+(-1)^{n+1}{(n+at)} p_{n;a}(t)
\end{eqnarray*}
and we conclude the proof.
\end{proof}

\begin{theorem} \label{main2} Take $a\in \CC$ and polynomial $(p_{n;a})_{n\ge 0}$. Then
\begin{itemize}

    \item[(i)] For $n\ge 0$,
$$
e^{-t}p_{n;a}(t)= \sum_{j=0}^\infty {(-1)^{n}}{(-t)^{j}\over j!}{(aj)_n\over n!}= \sum_{j=0}^\infty k^{-aj}(n){(-t)^{j}\over j!}, \qquad t\in \CC.
$$

\item[(ii)] For $z\in \DD,$
$$
e^{-t}\sum_{n=0}^\infty p_{n;a}(t)z^n=e^{-t(1-z)^a}, \qquad t\in \CC;
$$
In particular $\displaystyle{\sum_{n=0}^\infty p_{n;a}(t)=e^t}$ for $\Re \alpha>0$ and $t\in \CC$.

\item[(iii)] For $\Re a>0$, $(p_{n;a}(t))_{n\ge 0}\in \ell^1$ and
$$
\Vert (p_{n;a}(t))_{n\ge 0}\Vert_1\le e^{(c_a+1)\vert t\vert}, \qquad t\in \CC,
$$
where $c_a $ is given in Corollary \ref{cor}.
\end{itemize}
\end{theorem}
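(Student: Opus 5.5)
For (i), I will insert Theorem~\ref{main} into the series on the right and then reorder the sums. Since $(aj)_n=\sum_{l=1}^n b_{l,n;a}(j)_l$ for $n\ge1$, we get
$$
\sum_{j=0}^\infty {(-t)^j\over j!}(aj)_n=\sum_{l=1}^n b_{l,n;a}\sum_{j\ge l}{(-t)^j\over (j-l)!}=\sum_{l=1}^n b_{l,n;a}(-t)^l e^{-t}.
$$
The exchange of sums is legitimate because the inner $l$-sum is finite. Multiplying by $(-1)^n/n!$ and comparing with Definition~\ref{pol} gives the first equality. The case $n=0$ is just $\sum_j(-t)^j/j!=e^{-t}$. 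The second equality is the identity $(-1)^n(aj)_n/n!=(-aj)^{(n)}/n!=k^{-aj}(n)$, read off from (\ref{definition}).

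For (ii), I will use (i) and swap the order of summation. For fixed $z\in\DD$ and $t\in\CC$,
$$
e^{-t}\sum_n p_{n;a}(t)z^n=\sum_n\sum_j k^{-aj}(n){(-t)^j\over j!}z^n=\sum_j{(-t)^j\over j!}(1-z)^{aj}=e^{-t(1-z)^a},
$$
where the middle step uses the generating function (\ref{generating}). The swap is the main obstacle for general $a\in\CC$, because $k^{-aj}$ need not be summable. I plan to justify it by absolute convergence with a majorant. From $|(aj)_n|\le (|a|j)^{(n)}$ we get $|k^{-aj}(n)|\le k^{|a|j}(n)$. Then for $|z|<1$,
$$
\sum_{j,n}k^{|a|j}(n)|z|^n{|t|^j\over j!}=\sum_j{|t|^j\over j!}(1-|z|)^{-|a|j}=\exp\bigl(|t|(1-|z|)^{-|a|}\bigr)<\infty,
$$
so Fubini's theorem for double series applies.

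The particular statement $\sum_n p_{n;a}(t)=e^t$ for $\Re a>0$ follows by evaluating the double series directly at $z=1$. By Corollary~\ref{cor}, the series $\sum_{n,j}|k^{-aj}(n)||t|^j/j!$ converges, so the swap is allowed there too. Moreover $\sum_n k^{-aj}(n)$ is the value of $(1-z)^{aj}$ at $z=1$, which is $0$ for $j\ge1$ and $1$ for $j=0$. This holds because $k^{-aj}\in\ell^1$ (as $\Re(-aj)<0$), so its generating function extends continuously to $z=1$ by Abel's theorem. Only the $j=0$ term survives, which gives $e^{-t}\sum_n p_{n;a}(t)=1$.

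For (iii), I will take norms in (i):
$$
\sum_n|p_{n;a}(t)|\le e^{|t|}\sum_{n,j}|k^{-aj}(n)|{|t|^j\over j!}\le e^{|t|}e^{c_a|t|}.
$$
The last inequality is Corollary~\ref{cor} with $r=|t|$, and this yields the stated bound $e^{(c_a+1)|t|}$.
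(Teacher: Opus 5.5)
Your proof is correct. Parts (i) and (iii) coincide with the paper's argument: Theorem \ref{main} at $z=j$, summation of the exponential series, then Corollary \ref{cor}. Part (ii), however, follows a genuinely different route.

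The paper never interchanges sums in (ii). It reads off $p_{n;a}(t)=(-1)^nP_{n,t,a}(1)/n!$ from Theorem \ref{recur2} (formula (\ref{unno})) and then applies the Taylor expansion (\ref{serie}) of $e^{-t((z+\lambda)^a-\lambda^a)}$ at $\lambda=1$. So (ii) is just Taylor's theorem for a holomorphic function on $\DD$, with the derivative computations of Section 3 doing the work.

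You instead deduce (ii) from (i) and the generating function (\ref{generating}) of the Ces\`aro kernels. The price is an interchange of the $n$- and $j$-sums for arbitrary $a\in\CC$, where $k^{-aj}$ need not be summable. Your majorant $|k^{-aj}(n)|\le k^{|a|j}(n)$, whose double sum is $\exp\bigl(|t|(1-|z|)^{-|a|}\bigr)$, pays that price correctly.

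Your version makes (ii) depend only on the combinatorial identity of Theorem \ref{main}, bypassing Bell polynomials and derivatives entirely. It also mirrors the $\ell^1$ computation of $e^{-t(\delta_0-\delta_1)^\alpha}$ in Section \ref{41}. The paper's version, in exchange, produces the derivative interpretation (\ref{unno})--(\ref{twoo}), which is reused for the L\'evy identity in Corollary \ref{levy}.

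You also prove the ``in particular'' claim $\sum_n p_{n;a}(t)=e^t$, which the paper states without argument (it would also follow from (iii) and Abel's theorem). Your Fubini-at-$z=1$ argument is sound. It only needs $k^{-a}\in\ell^1$ together with $\Vert k^{-aj}\Vert_1\le \Vert k^{-a}\Vert_1^j$. It does not need the explicit constant $c_a$ of Corollary \ref{cor}, which is ill-defined when $\Re a$ is an integer.
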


\begin{proof} (i) By Theorem \ref{main}, we get
$$
\sum_{j=0}^\infty k^{-aj}(n){(-t)^{j}\over j!}=\sum_{j=0}^\infty{(-t)^{j}\over j!}{(-1)^n (aj)_n\over n!}={(-1)^n\over n!}\sum_{l=1}^n {b_{l,n;a}}\sum_{j=0}^\infty{(-t)^{j}\over j!}(j)_l= e^{-t}p_{n;a}(t),
$$
for $t\in \CC.$

\noindent (ii) Now take $z\in \DD$. By the equation (\ref{serie}) for $\lambda=1$, we get
\begin{eqnarray*}
e^{-t((1-z)^a-1)}=\sum_{n=0}^\infty(-1)^n{P_{n,t,a}(1)\over n!}{z^n}= \sum_{n=0}^\infty p_{n;a}(t)z^n.
\end{eqnarray*}

\noindent (iii) Take $\Re a>0$. By the item (i),
$$
\Vert (p_{n;a}(t))_{n\ge 0}\Vert_1=\sum_{n=0}^\infty \vert p_{n;a}(t)\vert\le  e^{\vert t\vert}\sum_{n,j=0}^\infty {\vert t\vert^j\over j!}\vert k^{-aj}(n)\vert\le e^{(c_a+1)\vert t\vert},
$$
where we have applied Corollary \ref{cor}.
and we conclude the proof.
\end{proof}


\noindent{\bf Remarks.} Note that formula (\ref{twoo}) and Theorem \ref{main2} (ii) are equivalent: we apply formula (\ref{serie}) to get
\begin{eqnarray*}
\sum_{n=0}^\infty{P_{n,t,a}(\lambda)\over n!}{z^n} &=&
\sum_{n=0}^\infty p_{n,a}(t\lambda^a)(-{z\over \lambda})^n=
e^{-t((z+\lambda)^a-\lambda^a)},
\end{eqnarray*}
for $z, \lambda \in \CC^+$ and $t\in \CC.$

By Theorem \ref{main2} (ii), polynomials $(n!p_{n;a})_{n\ge 0}$ are Sheffer polynomials for $a\not=0$. A sequence of polynomials
$\{S_n(t)\}_{n\ge 0}$ is called a \emph{Sheffer sequence} if its exponential generating function
has the form
\[
\sum_{n=0}^{\infty} S_n(t)\,\frac{z^n}{n!}
= A(z)\,e^{tB(z)},
\]
where $A(t)$ and $B(t)$ are formal power series satisfying
$A(0)\neq 0$, $B(0)=0$, and $B'(0)\neq 0$ see for example \cite[Introduction]{CG} and reference therein. In our case $A(z)=1$ and $B(z)=1-(1-z)^a$.

\begin{corollary} \label{levy} Let $f_{t,\alpha}$ be the  L\'{e}vy distribution defined in (\ref{levyfunct}) for $0<\alpha<1$. Then
\begin{eqnarray*}
   \int_0^\infty {r^n\over n!}e^{-r}f_{t,\alpha}(r)dr&=& e^{-t}p_{n;\alpha}(t), \qquad t>0, n\in \NN_0 \\
    \int_0^\infty I_0(2\sqrt{rs})e^{-r}f_{t,\alpha}(r)dr&=& e^{-t}\sum_{n=0}^\infty p_{n;\alpha}(t){s^n\over n!}, \qquad t>0,  \quad s\in \CC,
\end{eqnarray*}
where  $I_0$ denotes the modified Bessel function,  $\displaystyle{I_0(2z)=\sum_{n=0}^\infty {z^{2n}\over (n!)^2}}$, \cite[Chapter III]{MOS}.

In particular $p_{n;\alpha}(t)\ge 0$ for $0<\alpha<1$ and $t>0$ and
  $$
{1\over \sqrt{\pi}}\int_0^\infty {r^{n-{3\over 2}}}e^{-r(1+\frac{t^2}{4r^2})}dr=
\frac{e^{-t}}{2^{n-1} }\theta_{n-1}(t),\qquad t\in \CC,
$$
where  $\theta_{n}$ is the reverse Bessel polynomial of order $n$.
\end{corollary}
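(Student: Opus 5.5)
The plan is to compute the Laplace transform of $f_{t,\alpha}$ and compare power series. From the definition (\ref{levyfunct}) and the standard facts $f_{t,\alpha}\ge 0$, $\Vert f_{t,\alpha}\Vert_1=1$, we have
$$
\int_0^\infty e^{-\lambda r}f_{t,\alpha}(r)\,dr=e^{-t\lambda^\alpha},\qquad \Re\lambda>0 .
$$
This holds for real $\lambda>0$ by inversion of the Laplace transform, and extends to $\Re\lambda>0$ by analyticity. Evaluating at $\lambda=1-z$ with $z\in\DD$ gives
$$
\int_0^\infty e^{zr}e^{-r}f_{t,\alpha}(r)\,dr=e^{-t(1-z)^\alpha}.
$$

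For the first identity, expand $e^{zr}=\sum_n z^nr^n/n!$. Since $f_{t,\alpha}\ge0$ and
$$
\int_0^\infty e^{|z|r}e^{-r}f_{t,\alpha}(r)\,dr<\infty\quad\text{for }|z|<1,
$$
Tonelli/Fubini lets us exchange sum and integral. The left side then becomes
$$
\sum_n z^n\int_0^\infty \frac{r^n}{n!}e^{-r}f_{t,\alpha}(r)\,dr .
$$
By Theorem \ref{main2}(ii), the right side equals $e^{-t}\sum_n p_{n;\alpha}(t)z^n$. Comparing Taylor coefficients at $z=0$ gives the first formula. Nonnegativity of $p_{n;\alpha}(t)$ for $t>0$ is then immediate, since the integrand is nonnegative.

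For the second identity, write $I_0(2\sqrt{rs})=\sum_n r^ns^n/(n!)^2$. Integrate term by term against $e^{-r}f_{t,\alpha}(r)$ and use the first formula. The justification is dominated convergence: $|I_0(2\sqrt{rs})|\le I_0(2\sqrt{r|s|})\le e^{2\sqrt{r|s|}}$, and $e^{2\sqrt{r|s|}-r}$ is bounded. Hence the series of absolute values is bounded by
$$
\sum_n \frac{|s|^n}{n!}\int_0^\infty \frac{r^n}{n!}e^{-r}f_{t,\alpha}(r)\,dr=e^{-t}\sum_n p_{n;\alpha}(t)\frac{|s|^n}{n!}\le e^{-t}e^{|s|}\sum_n p_{n;\alpha}(t)=e^{|s|}.
$$
The last equality uses Theorem \ref{main2}(ii) and the fact that the $p_{n;\alpha}(t)$ are nonnegative.

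For the explicit case $\alpha=\tfrac12$, substitute $f_{t,1/2}(r)=\frac{t}{\sqrt{4\pi r^3}}e^{-t^2/(4r)}$ and $p_{n;1/2}(t)=\frac{t}{2^nn!}\theta_{n-1}(t)$ into the first identity. After multiplying by $n!$ and cancelling $t/(2\sqrt\pi)$, this gives
$$
\frac1{\sqrt\pi}\int_0^\infty r^{n-3/2}e^{-r-t^2/(4r)}\,dr=\frac{e^{-t}}{2^{n-1}}\theta_{n-1}(t),\qquad t>0,\ n\ge1 .
$$
To pass to $t\in\CC$, note that $e^{-r(1+t^2/(4r^2))}=e^{-r-t^2/(4r)}$ holds exactly, so we continue analytically. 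The integral converges and is holomorphic in $t$ for $\Re(t^2)>0$, where it agrees with the entire function on the right. For general $t\in\CC$ the left side must be interpreted via that continuation, or the identity restricted to $\Re(t^2)\ge 0$.

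The main obstacle is rigorously establishing the Laplace transform identity from the Bromwich-integral definition. I would either cite Yosida \cite{Yo80} or derive it by closing the contour; the remaining steps are routine.
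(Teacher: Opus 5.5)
Your proof of the two integral identities and of $p_{n;\alpha}(t)\ge 0$ is correct and is essentially the paper's argument. The paper differentiates $e^{-tz^\alpha}=\int_0^\infty e^{-zr}f_{t,\alpha}(r)\,dr$ $n$ times at $z=1$ and invokes (\ref{unno}). You expand $e^{zr}$ at $\lambda=1-z$ and match coefficients with Theorem~\ref{main2}(ii). That is the same Taylor-coefficient computation, and your Tonelli and dominated-convergence justifications, which the paper omits, are sound.

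Your $\alpha=\frac12$ computation for $t>0$, $n\ge 1$ is also right. You are correct that $n=0$ must be excluded, since it would force $\theta_{-1}(t)=1/t$. You are also correct that the integral diverges when $\Re(t^2)<0$, so the stated range $t\in\CC$ cannot hold; the paper's one-line proof only yields $t>0$.

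The gap is in your analytic continuation. The set $\{\Re(t^2)>0\}$ is not connected: it is the union of the sectors $|\arg t|<\pi/4$ and $|\arg t-\pi|<\pi/4$. The identity theorem only carries the equality from $(0,\infty)$ into the first sector. On the second sector the left-hand side, which depends only on $t^2$, equals $2^{1-n}e^{t}\theta_{n-1}(-t)$ rather than $2^{1-n}e^{-t}\theta_{n-1}(t)$. For instance, take $n=1$ and $t=-1$. The left side is $\pi^{-1/2}\int_0^\infty r^{-1/2}e^{-r-1/(4r)}\,dr=e^{-1}$, while the right side is $e^{1}\theta_0(-1)=e$.

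So neither $\Re(t^2)>0$ nor $\Re(t^2)\ge 0$ is a valid range. The correct conclusion is the closed sector $|\arg t|\le \pi/4$, including $t=0$. For $n\ge 1$ the integrand there is dominated by $r^{n-3/2}e^{-r}$, so both sides are continuous on that set. This is the most the final formula of the corollary can claim.
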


\begin{proof} Since
$$
e^{-tz^\alpha}=\int_0^\infty f_{t,\alpha}(r)e^{-zr}dr, \qquad \Re z>0,
$$
(see (\ref{levyfunct})) then we apply  (\ref{derivative}) to get
$$
e^{-tz^\alpha}P_{n,t,\alpha}(z)=F^{(n)}_{t,\alpha}(z)={(-1)^n}\int_0^\infty r^n e^{-zr}f_{t,\alpha}(r)dr.
$$
Now take $z=1$, and we use the equality (\ref{unno}) to obtain the first equality. The second equality follows directly from the first one.

Since $f_{t,\alpha}(r)\ge 0$ for $r,t>0$ and $0<\alpha<1$, we conclude $p_{n;\alpha}(t)\ge 0$ for $0<\alpha<1$ and $t>0$. Finally take $\alpha={1\over 2}$ in the first equality to obtain the last one.
\end{proof}
\section{Entire groups generated by fractional powers}\label{coboundaries}

Let $X$ be a Banach space, ${\mathcal B}(X)$ the set of linear and bounded operators on $X$, $I$  the identity operator on $X$ and $T\in {\mathcal B}(X)$  a power bounded operator, i.e.
 \begin{equation}\label{pbb}C_T:=\sup_{n\ge 0}{\Vert T^n\Vert}<\infty,
 \end{equation}
where $T^0=I.$  Let $\Psi_T: \ell^1\to {\mathcal B}(X)$  be the linear and bounded map (also called a discrete functional calculus) given by
$$
\Psi_T(f)= \sum_{n=0}^\infty f(n)T^n, \qquad f\in \ell^1.
$$
It is straightforward to check $\Vert\Psi_T(f)\Vert\le C_T\Vert f\Vert_1$ and $\Psi_T(f\ast g)=\Psi_T(f)\Psi_T(g)$ for $f, g\in \ell^1$, see for example, \cite[Section 2]{Dun}.

As the sequence $(k^{\alpha}(n))_{n\ge 0}\in \ell^1$ for $ \Re \alpha<0$ (see Section 2), we may consider  fractional powers of $(I-T)$,
$$
(I-T)^{\alpha}:=\sum_{n=0}^{\infty}k^{-\alpha}(n)T^n,  \qquad \Re \alpha >0.
$$
(see \cite[Section 6]{LR} for $\alpha>0$). This definition of fractional power coincides with other known ones. For example, since $T$ is a power bounded operator on a Banach space $X$, then $(e^{zT})_{z\in \CC}$ is an entire group such that $\Vert e^{zT}\Vert\le C_Te^{\vert z\vert}$, in particular
 $$
 \sup_{t>0}\Vert e^{-t(I-T)}\Vert<\infty.
 $$
 It is well-known that
 $$
 (I-T)^{\alpha}x={´\sin(\alpha \pi)\over \pi}\int_0^\infty\lambda^{\alpha-1}(\lambda+I-T)^{-1}(I-T)xd\lambda={1\over \Gamma(-\alpha)}\int_0^\infty {e^{-t(I-T)}x-x\over t^{\alpha+1}}dt,
 $$
for $0<\alpha<1$ and  $x\in X$, see for example \cite[Chapter IX]{Yo80}.

As consequences of Theorem \ref{keys} and   \ref{main2} (iii), we estimate the norm of the entire group $(e^{t(I-T)^\alpha})_{t\in \CC}$ generated by $(I-T)^{\alpha}$.

\begin{corollary}\label{poww} Take $\Re \alpha >0$ and $T\in {\mathcal B}(X)$ a power bounded operator. Then
\begin{itemize}
\item[(i)] The operator $(I-T)^{\alpha}$ is bounded and
$$\Vert (I-T)^{\alpha}\Vert\le C \Vert k^{-\alpha}\Vert_1\le c_\alpha C_T,
$$
where $c_\alpha$ is given in Corollary \ref{cor}.
\item[(ii)] The entire group $(e^{-t(I-T)^\alpha})_{t\in \CC}$ verifies
$$
e^{-t(I-T)^\alpha}=e^{-t} \sum_{n=0}^\infty p_{n;\alpha}(t)T^n, \quad \Re \alpha>0.
$$
and $\Vert e^{t(I-T)^\alpha}\Vert\le C_T e^{(c_\alpha+1)\vert t\vert}$ for $t\in \CC.$
\end{itemize}

\end{corollary}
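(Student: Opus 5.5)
Both items follow from the discrete functional calculus $\Psi_T:\ell^1\to{\mathcal B}(X)$, which is bounded with $\Vert\Psi_T(f)\Vert\le C_T\Vert f\Vert_1$ and multiplicative. The plan is to build everything on this algebra homomorphism.

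For (i), the definition gives $(I-T)^\alpha=\Psi_T(k^{-\alpha})$, and $k^{-\alpha}\in\ell^1$ whenever $\Re\alpha>0$. Hence
$$\Vert (I-T)^\alpha\Vert\le C_T\Vert k^{-\alpha}\Vert_1,$$
and Theorem \ref{keys}(iii) yields $\Vert k^{-\alpha}\Vert_1\le c_\alpha$ (the constant of Corollary \ref{cor}).

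For (ii), write $A=(I-T)^\alpha$. Since $A$ is bounded, $e^{-tA}=\sum_{j\ge0}\frac{(-t)^j}{j!}A^j$ converges in norm for every $t\in\CC$. By multiplicativity and the semigroup law $k^{\beta}\ast k^{\gamma}=k^{\beta+\gamma}$ we have $A^j=\Psi_T(k^{-\alpha j})$, with $k^{0}=\delta_0$ so that $A^0=I$. Therefore
$$e^{-tA}=\sum_{j\ge 0}\frac{(-t)^j}{j!}\sum_{n\ge0}k^{-\alpha j}(n)T^n .$$
The main point is justifying the exchange of the two sums. Corollary \ref{cor} gives
$$\sum_{n,j}\vert k^{-\alpha j}(n)\vert\,\frac{\vert t\vert^j}{j!}\le e^{c_\alpha\vert t\vert}<\infty,$$
and $\Vert T^n\Vert\le C_T$, so the double series converges absolutely in ${\mathcal B}(X)$ and can be reordered. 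Summing over $j$ first and applying Theorem \ref{main2}(i), the coefficient of $T^n$ is $\sum_j k^{-\alpha j}(n)\frac{(-t)^j}{j!}=e^{-t}p_{n;\alpha}(t)$. This gives the stated representation. The group property $e^{-(t+s)A}=e^{-tA}e^{-sA}$ is automatic for the exponential of a bounded operator; it is also consistent with the convolution identity for $p_{n;\alpha}$.

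For the norm bound, rewrite the representation as $e^{-tA}=\Psi_T\big((e^{-t}p_{n;\alpha}(t))_n\big)$. Theorem \ref{main2}(iii) gives $\Vert(p_{n;\alpha}(t))_n\Vert_1\le e^{(c_\alpha+1)\vert t\vert}$, which would lead to the extra factor $\vert e^{-t}\vert$. It is cleaner to bound the double series above directly:
$$\Vert e^{-tA}\Vert\le C_T\, e^{c_\alpha\vert t\vert}\le C_T\, e^{(c_\alpha+1)\vert t\vert}.$$
Replacing $t$ by $-t$ gives the bound for $e^{tA}$. The only delicate step is the interchange of summations, and Corollary \ref{cor} handles it completely.
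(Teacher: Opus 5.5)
Your proposal is correct and is essentially the argument the paper intends. The paper gives no separate proof; it only points to Theorem \ref{keys} and Theorem \ref{main2}, and the underlying route is the one you use: the calculus $\Psi_T$ with $(I-T)^\alpha=\Psi_T(k^{-\alpha})$ and $A^j=\Psi_T(k^{-\alpha j})$, absolute convergence of the double series from Corollary \ref{cor}, and Theorem \ref{main2}(i) to identify the coefficients $e^{-t}p_{n;\alpha}(t)$.

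Bounding the double series directly, so that $\Vert e^{-t(I-T)^\alpha}\Vert\le C_T e^{c_\alpha\vert t\vert}$, is a sensible choice, since quoting Theorem \ref{main2}(iii) would leave a stray factor $\vert e^{-t}\vert$. Note also that everything runs verbatim with $\Vert k^{-\alpha}\Vert_1$ in place of $c_\alpha$. This is worth doing, because the paper's formula for $c_\alpha$ has a vanishing denominator $\Re\alpha-[\Re\alpha]$ when $\Re\alpha\in\NN$.
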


\noindent {\bf Comments and Examples}. A vector $y\in X$ is called an $\alpha$-fractional coboundary for $T$ if there exists $x\in X$ such that $(I-T)^\alpha (x)=y$. This equation is known as the Poisson equation (see \cite{ALG, DL}). The operator $(I-T)^\alpha$ can thus be interpreted as a fractional coboundary operator.

 For $0<\alpha<1$ and $x\in X$, L\'{e}vy functions allow us to consider $C_0$-semigroup theory generated by $-(I-T)^\alpha$  (see Introduction and (\cite[Chapter IX]{Yo80})). In fact both approaches coincide,
 \begin{eqnarray*}
 e^{-t(I-T)^\alpha}(x)&=&\int_0^\infty f_{t,\alpha}(s) e^{-s(I-T)}(x)ds=\sum_{n=0}^\infty\int_0^\infty f_{t,\alpha}(s) e^{-s}{(sT)^n(x)\over n!}ds\cr
 &=& e^{-t}\sum_{n=0}^\infty p_{n;\alpha}(t)T^n(x),
\end{eqnarray*}
 where we have applied Corollary \ref{levy}.

 For particular values of $a$, we obtain the following explict representation. For $\alpha={1\over 2}$
$$
 e^{-t\sqrt{I-T}}=e^{-t}\sum_{n=0}^\infty \frac{t}{2^n n!}\theta_{n-1}(t)T^n, \qquad t\in \CC,
$$
where $\theta_{-1}=1$ and $\theta_{n}$ is the reverse Bessel polynomial of order $n$. For $\alpha=m\in \NN, $

\begin{eqnarray*}
e^{-t(I-T)^2}&=&e^{-t}\sum_{n=0}^\infty{{t}^{n\over 2}\over n!}H_n(\sqrt{t})T^n, \qquad t\in \CC,\\
e^{-t(I-T)^m}&=&e^{-t}\sum_{n=0}^\infty{{t}^{n\over m}\over n!}H_n^{(m)}(t^{1\over m})T^n, \qquad t\in \CC,
\end{eqnarray*}
where $H_n$ and $H_n^{(m)}$ are Hermite and Gould-Hopper polynomials, respectively.

Now we consider a proper contraction, i.e, $T\in {\mathcal B}(X)$ such that $\Vert T\Vert<1$. In the case the operator $-(I-T)^\alpha$ is  bounded for any $\alpha\in \CC$ and generated a whole group $ (e^{-t(I-T)^\alpha})_{t\in \CC}$ for any $\alpha\in \CC$.

\begin{theorem}\label{poww2}  Take $ \alpha \in \CC$ and $T\in {\mathcal B}(X)$ a proper contraction. Then
\begin{itemize}
\item[(i)] The operator $(I-T)^{\alpha}\in {\mathcal B}(X)$.

\item[(ii)] The entire group $(e^{-t(I-T)^\alpha})_{t\in \CC}$ verifies
$$
e^{-t(I-T)^\alpha}=e^{-t} \sum_{n=0}^\infty p_{n;\alpha}(t)T^n, \qquad t\in \CC.
$$
\end{itemize}

\end{theorem}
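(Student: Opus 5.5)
Since $\Vert T\Vert=r<1$, I would use the holomorphic functional calculus on a disc strictly larger than $r$ but inside $\DD$, or work directly with absolutely convergent power series. For (i), I define $(I-T)^\alpha:=\sum_{n\ge0}k^{-\alpha}(n)T^n$. The Cesàro numbers satisfy $|k^{-\alpha}(n)|=O(n^{-\Re\alpha-1})$ by (\ref{jiji}), so they grow at most polynomially for every $\alpha\in\CC$. Hence $\sum_n |k^{-\alpha}(n)|\,\Vert T\Vert^n<\infty$, and the series converges in norm in ${\mathcal B}(X)$. This gives (i). By (\ref{generating}), this is $\Phi(g_\alpha)$, where $g_\alpha(z)=(1-z)^\alpha$ is holomorphic on $\DD$ and $\Phi(h)=\sum_n \hat h(n)T^n$ is the calculus of power series on $\DD$ applied at $T$. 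The map $\Phi$ is an algebra homomorphism from $H(\DD)$ to ${\mathcal B}(X)$: Cauchy products of absolutely convergent series at $T$ correspond to products of functions. It is also continuous for locally uniform convergence, since coefficients are controlled by Cauchy estimates on the circle of radius $\rho\in(r,1)$.

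For (ii), I would set $A=(I-T)^\alpha$ and $G(t)=e^{-tA}=\sum_k (-t)^kA^k/k!$, which is norm convergent because $A$ is bounded. I have $A^k=\Phi(g_\alpha^k)$. The partial sums $\sum_{k\le N}(-t)^k g_\alpha^k/k!$ converge to $e^{-tg_\alpha}$ uniformly on $|z|\le\rho$, since $g_\alpha$ is bounded there. By continuity of $\Phi$, I get $G(t)=\Phi(e^{-t(1-z)^\alpha})$. To make the continuity concrete, write $M_\rho=\sup_{|z|=\rho}|h(z)|$. Then $\Vert\Phi(h)\Vert\le\sum_n M_\rho\rho^{-n}r^n=M_\rho\,\rho/(\rho-r)$.

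Finally, Theorem \ref{main2}(ii) gives the Taylor expansion $e^{-t(1-z)^\alpha}=e^{-t}\sum_n p_{n;\alpha}(t)z^n$ on $\DD$, valid for all $a\in\CC$. Applying $\Phi$ yields $e^{-t(I-T)^\alpha}=e^{-t}\sum_n p_{n;\alpha}(t)T^n$, with norm convergence for every $t\in\CC$. The group property follows from $e^{-(t+s)A}=e^{-tA}e^{-sA}$, which matches the convolution identity for $p_{n;\alpha}$.

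The only real obstacle is justifying that $\Phi$ is multiplicative and continuous, so that the exponential series may be interchanged with the power-series expansion. The Cauchy estimate bound above handles both: Mertens' theorem for absolutely convergent series gives multiplicativity, and the bound gives continuity.
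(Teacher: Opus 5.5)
Your proof is correct, but it follows a different route from the paper's.

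The paper stays at the level of coefficient sequences. For (i) with $\Re\alpha\le 0$ it uses the asymptotics (\ref{double}) and $\Vert T\Vert<1$; the case $\Re\alpha>0$ is Corollary \ref{poww}. For (ii) it uses $k^{-\alpha m}=(k^{-\alpha})^{m\ast}$ to get $\sum_n|k^{-\alpha m}(n)|\,\Vert T\Vert^n\le D^m$ with $D=\sum_n|k^{-\alpha}(n)|\,\Vert T\Vert^n$. Hence the double series $\sum_{m,n}|k^{-\alpha m}(n)|\,\Vert T\Vert^n|t|^m/m!$ is bounded by $e^{D|t|}$, which justifies swapping the sums in $\sum_m\frac{(-t)^m}{m!}\sum_n k^{-\alpha m}(n)T^n$. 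The inner coefficient $\sum_m k^{-\alpha m}(n)(-t)^m/m!$ is then $e^{-t}p_{n;\alpha}(t)$ by Theorem \ref{main2}(i). The paper leaves this last step implicit, as it does the identity $((I-T)^\alpha)^m=\sum_n k^{-\alpha m}(n)T^n$.

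You work on the function side instead. The Cauchy estimate on $|z|=\rho\in(\Vert T\Vert,1)$ makes the power-series calculus a continuous homomorphism on $H(\DD)$. So $e^{-tA}=\Phi(e^{-t(1-z)^\alpha})$, and you read off the coefficients from the generating function in Theorem \ref{main2}(ii).

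What the paper's route buys: it is more elementary, using only the semigroup law of the Ces\`aro kernels plus Fubini. It also fits the $\ell^1$ framework of Corollary \ref{poww} and gives the clean bound $\Vert e^{-t(I-T)^\alpha}\Vert\le e^{D|t|}$.

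What your route buys: it makes explicit the multiplicativity the paper uses tacitly. It needs neither Theorem \ref{main2}(i) nor $k^\alpha\ast k^\beta=k^{\alpha+\beta}$, and your Cauchy estimate even makes the appeal to (\ref{jiji}) in (i) unnecessary. It also yields $F(\Phi(g))=\Phi(F\circ g)$ for any entire $F$ and $g\in H(\DD)$, so identifying the coefficients always reduces to a Taylor expansion. The cost is a less transparent growth bound, $\frac{\rho}{\rho-\Vert T\Vert}\sup_{|z|=\rho}|e^{-t(1-z)^\alpha}|$.
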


\begin{proof} Take $x\in X$. The case of $\Re \alpha>0$ is considered in Corollary \ref{poww}. Now  take  $\Re \alpha\le 0$. To show (i), note that
$$
\Vert (I-T)^{\alpha}(x)\Vert\le \Vert x\Vert\sum_{n=0}^{\infty}\vert k^{-\alpha}(n)\vert \Vert T\Vert^n \le C\Vert x\Vert\left(1+\sum_{n=1}^{\infty}{n^{-\Re\alpha-1} }\Vert T\Vert^n\right)<\infty,
$$
where we have applied (\ref{double}) and we conclude that $(I-T)^{\alpha}\in {\mathcal B}(X)$. To prove (ii), it is clear that  $e^{-t(I-T)^\alpha}\in {\mathcal B}(X)$. Moreover,  since $k^\alpha \ast k^\beta= k^{\alpha +\beta}$ for $\alpha,\beta \in \CC$, it is straightforward to check
$$
\sum_{n=0}^\infty \vert k^{-\alpha m}(n)\vert \Vert T\Vert^n\le \left(\sum_{n=0}^\infty \vert k^{-\alpha }(n)\vert \Vert T\Vert^n\right)^m:= D^m,
$$
for $m\in \NN\cup \{0\}$.Then
\begin{eqnarray*}
    \Vert e^{-t(I-T)^\alpha}(x) \Vert&\le& \sum_{m=0}^\infty\Vert(I-T)^{\alpha m}(x)\Vert {\vert t\vert ^m  \over m!}\le \Vert x\Vert \sum_{m=0}^\infty \sum_{n=0}^\infty \vert k^{-\alpha m}(n)\vert \Vert T\Vert^n  {\vert t\vert^m  \over m!}\cr
    &\le& \Vert x\Vert \sum_{m=0}^\infty   {(D\vert t\vert)^m  \over m!}=\Vert x\Vert e^{D\vert t\vert}<\infty,\cr
\end{eqnarray*}
and we conclude the proof.
\end{proof}

 \noindent {\bf Example}. In the case that $\alpha=-1$ and $T$ is a proper contraction, we consider the entire group generated by the resolvent operator $(I-T)^{-1}$. In this case
$$
e^{-t(I-T)^{-1}}=\sum_{n=0}^\infty L_n^{(-1)}(t)T^n,\qquad t\in \CC,
$$
where $L_n^{(-1)}$ are the generalized Laguerre polynomials, \cite[Example 8.7.3]{GR}.

\section{Applications, conjectures and final comments}

\subsection{Fractional powers in the Banach algebra $\ell^1$}\label{41}

 The Dirac measures $\delta_0$ and $\delta_n$  are $\delta_n(j)=0$ if $n\not=j$ and $\delta_n(n)=1$ for $n,j\in \NN_0.$ We consider the element $\omega=\delta_0-\delta_1$. Obviously $\omega \in \ell^1$, $\Vert \omega\Vert_1=2$, and
 $$
 e^{-t\omega}(n)={e^{-t}}{t^n\over n!}, \qquad n\in \NN_0, \quad t\in \CC,
 $$
see for example, \cite[Theorem 3.3]{GLM}. In this subsection, we are interested in studying the fractional power $(\delta_0-\delta_1)^\alpha$ and the entire group $e^{-t(\delta_0-\delta_1)^\alpha}$ for $\alpha, t\in \CC.$

 Since $k^\alpha\in \ell^1$ for $\Re \alpha <0$ (see details in Theorem \ref{keys}), we  define the fractional power $(\delta_0-\delta_1)^{\alpha}\in \ell^1$ via the  generating formula (\ref{generating})
\begin{equation}\label{jaja}
(\delta_0-\delta_1)^{\alpha}:=\sum_{n=0}^{\infty}k^{-\alpha}(n)\delta_n, \qquad \Re \alpha >0.
\end{equation}
Now consider the entire group $(e^{-t(\delta_0-\delta_1)^{\alpha}})_{t\in \CC}$. By Theorem \ref{main2} (i), we get
$$
e^{-t(\delta_0-\delta_1)^{\alpha}}(n)= \sum_{j=0}^\infty{(-t)^j\over j!}(\delta_0-\delta_1)^{\alpha j}(n)=
\sum_{j=0}^\infty{(-t)^j\over j!}k^{-\alpha j}(n)=e^{-t} p_{n;\alpha}(t),
$$
for $n\ge 0$, $t\in \CC $, and we conclude that
$$
e^{-t(\delta_0-\delta_1)^\alpha}=e^{-t} \sum_{n=0}^\infty p_{n;\alpha}(t)\delta_n, \qquad t\in \CC, \Re \alpha>0.
$$

In the case that $\Re \alpha\le 0$, formula \ref{jaja} holds for $n\in \NN_0$, i.e,
$$
(\delta_0-\delta_1)^{\alpha}(n):=k^{-\alpha}(n),
$$
and then $(\delta_0-\delta_1)^{j\alpha}(n):=k^{-j\alpha}(n)$  for $n,j\in \NN_0$. We  apply the estimate (\ref{jiji}) and  Theorem \ref{main2} (i) again to conclude
$$
e^{-t(\delta_0-\delta_1)^{\alpha}}(n)=
\sum_{j=0}^\infty{(-t)^j\over j!}k^{-\alpha j}(n)=e^{-t} p_{n;\alpha}(t), \quad t\in \CC, \Re \alpha\le 0, n\ge 0.
$$

\subsection{ Multiplication operators on Hardy spaces $H^p(\DD)$}\label{multi}

Given $p\ge 1$, an analytic function $f$ belongs to the space $H^p(\mathbb{D})$ if the following norm is finite:
\[ \|f\|_p^p := \sup_{0 < r < 1} \frac{1}{2\pi} \int_0^{2\pi} |f(r e^{i\theta})|^p d\theta < \infty. \]
As usual we denote by $H^\infty(\mathbb{D})$ is the set of  bounded analytic funcions defined on the unit disc $\DD$ (see, for example \cite[Chapter 17]{rudin}).

We consider the function $h(z):=1-(1-z)^\alpha$  for $z\in \DD$. It is direct to check that $h \in H^\infty(\mathbb{D}) $ if and only if $\Re \alpha \ge 0$. In this case, we define the multiplication operator $M_h:H^p(\mathbb{D}) \to H^p(\mathbb{D})$, where $M_h(f):=hf$ for $f\in H^p(\mathbb{D})$ and $\Vert M_h\Vert= \Vert h \Vert_\infty$. Then it is clear that
$e^{th}\in H^\infty(\mathbb{D})$ and
$$
e^{th}(z)=\displaystyle\sum_{n=0}^\infty p_{n;\alpha}(t)z^n, \qquad z\in \CC,
$$
for $t\in \CC$ and $\Re a\ge 0$ (Theorem \ref{main2} (ii)); in particular
$$
\Vert e^{th}\Vert_2^2=\sum_{n=0}^\infty \vert p_{n;\alpha}(t)\vert^2, \qquad t\in \CC, \qquad \Re\alpha \ge 0.
$$
Moreover, given $f\in H^p(\DD)$, the solution of Cauchy problem
$$
\begin{cases}
\displaystyle{\partial \over \partial t} u(t,z)=M_h(u(t
,z)), &  t\ge 0, \\
u(0,z)=f(z) , &
\end{cases}
$$
is given by $u(t,z)=\displaystyle\sum_{n=0}^\infty p_{n;\alpha}(t)z^n f(z)$, see similar multiplication semigroups on $L^p$ spaces in  \cite[Section I.4]{En-Na-00}

 Now we characterize  directly when the analytic function $e^{th}\in H^p(\DD)$, i.e., $E_{t,\alpha}\in H^p(\DD)$  where
$$
E_{t,\alpha}(z):= e^{-t(1-z)^\alpha}, \qquad z\in \DD,\quad  t\in \CC.
$$

 In fact we need to study  the behaviour of $E_{t,\alpha}$ in the neighborhood  of the critical point $z=1$.  We write $\alpha = \gamma + i\beta,$ ($\gamma,\beta \in \RR$); $1-z=re^{i\theta}$,  and restrict $t\in \RR$. when $z\to 1$, then $1-z\to 0$, i.e., $r\to 0^+$ and $|\theta| \leq \frac{\pi}{2}$.
Since
$$
(1-z)^\alpha = r^\gamma e^{-\beta \theta} \left[ \cos(\beta \ln r + \gamma \theta) + i \sin(\beta \ln r + \gamma \theta) \right]
$$
then
$$
|E_{t,\alpha}(z)| = \exp\left( {\Re}\left[ -t(1-z)^\alpha \right] \right) = \exp\left( -t \cdot r^\gamma e^{-\beta \theta} \cos(\beta \ln r + \gamma \theta) \right), \qquad z\in \DD.
$$
  For $\gamma > 0$, and $r \to 0^+$, then $\lim_{z \to 1} |E_{t,\alpha}(z)| = e^0 = 1,$ and  $ E_{t,\alpha}\in H^\infty(\DD)$. Now take $\gamma = 0$,
    \[ |E_{t,\alpha}(z)| \in \left[ e^{-t e^{|\beta|\pi/2}}, e^{t e^{|\beta|\pi/2}} \right], \quad r\to 0^+, \]
and we conclude $ E_{t,\alpha}\in H^\infty(\DD)$, as we have commented previously. Finally for $\gamma<0$  and $r \to 0^+$, then $r^\gamma \to \infty$ and   $\cos(\beta \ln r + \gamma \theta) < 0$, for some $r$ and $\theta.$ Fixed $t>0$, then
    \[ \limsup_{z \to 1} |E_{t,\alpha}(z)| = \infty,  \]
and $ E_{t,\alpha}\not\in H^\infty(\DD)$.

Similarly we  prove that  $ E_{t,\alpha}\in H^p(\DD)$ for $1\le p<\infty$ if and only if $\Re \alpha \ge 0.$

\subsection{Fractional powers  for $C_\alpha$-bounded operatos} Let $T$ be a bounded lineal operator on a Banach space $X$ and we denote by $\mathcal{T}:= (T^n)_{n\ge 0}$.

Take $\alpha\ge0$ and set, for $x\in X$ and $\ n\in \N_0$,
\begin{equation*}
\Delta^{-\alpha} \mathcal{T}(n)x :
= \displaystyle\sum_{j=0}^n k^{\alpha}(n-j)T^j x,\qquad M_T^{\alpha}(n)x
:=\frac{1}{k^{\alpha+1}(n)}\Delta^{-\alpha} \mathcal{T}(n)x.
\end{equation*}
The operators $\Delta^{-\alpha} \mathcal{T}(n)$ and
$M^{\alpha}_T(n)$ in $\mathcal{B}(X)$ are called the $n$-th
Ces\`{a}ro sum and Ces\`{a}ro mean of $T$ of order $\alpha$, respectively.

The operator $T$ is called Ces\`{a}ro bounded of order $\alpha$, or simply $(C,\alpha)$-bounded, if it satisfies
$\sup_{n} \|M_T^{\alpha}(n) \| < \infty$. Thus
$(C,0)$-boundedness is the same as
power-boundedness, that is, $\sup_{n}\Vert T^n\Vert<\infty$.
For $\alpha=1$ the operator $T$ is called Ces\`{a}ro mean bounded (or Ces\`{a}ro bounded).
If $T$ is $(C,\alpha)$-bounded
then it is $(C,\beta)$-bounded for every $\beta>\alpha$ but the converse does not hold true in general;
e.g. \cite[Section 4.7]{Ed}  and  \cite[Remark 2.3]{Su-Ze13}.

Now fix $T\in {\mathcal B}(X)$ a $(C,\alpha)$-bounded operator.  In \cite[Corollary 3.7]{ALMV}, a linear homomorphism $\Psi_T: \tau^{\alpha}(k^{\alpha+1})\to {\mathcal B}(X)$ is defined by
$$
\Psi_T(f)(x)=\sum_{n=0}^\infty W^\alpha f(n) \Delta^{-\alpha} \mathcal{T}(n)(x), \qquad x\in X,
$$
where $W^\alpha$ is a fractional discrete derivation, $W^1f(n)= f(n)-f(n+1)$, $W^{m+1}f= W^{m}(W^1f)$ for $m\ge 1$, and see definition for $\alpha>0$ in  \cite{ALMV, AM2018}. Note that  $\Psi_T(\delta_1)= T$ where $\delta_1(n):=\delta_{1,n}$ for $n\ge 0$ is the  Kronecker delta. The Banach algebra $ \tau^{\alpha}(k^{\alpha+1})\subset \ell^1$ and $k^\beta\in \tau^{\alpha}(k^{\alpha+1}) $ for $\Re \beta <0$ (\cite[Lemma 5.3]{AM2018}). Then it seems natural to conjecture
\begin{eqnarray*}
    (I-T)^{\alpha}&:=& \Psi_T(k^{-\alpha}), \cr
    e^{t(I-(I-T)^{\alpha })}&:=&\Psi_T(p_{(\cdot);\alpha}(t)),
\end{eqnarray*}
for $\Re \alpha>0.$ These ideas could be the starting point of a new research.

\subsection*{Acknowledgements} Authors thank Luciano Abadias, José E. Galé, Eva A. Gallardo-Gutiérrez, and  Jes\'us Oliva-Maza for  several comments, corrections and suggestions that led to an improved version of this paper.


\end{document}